\documentclass[12pt]{amsart}
\usepackage{amscd,amsfonts,amssymb,amsmath}
\usepackage{hyperref}
\usepackage{epsfig}

\newtheorem{theorem}{Theorem}[section]
\newtheorem{corollary}[theorem]{Corollary}
\newtheorem{lemma}[theorem]{Lemma}
\newtheorem{proposition}[theorem]{Proposition}
\newtheorem{example}[theorem]{Example}

\theoremstyle{remark}
\newtheorem{remark}[theorem]{Remark}
\newtheorem{definition}[theorem]{Definition}

\newtheorem{question}[theorem]{Question}

\newtheorem*{ack}{Acknowledgement}

\usepackage[all,cmtip]{xy}
\usepackage{xcolor}

\newcommand{\Alex}{\operatorname{Alex}}
\newcommand{\Aut}{\operatorname{Aut}}
\newcommand{\Out}{\operatorname{Out}}

\newcommand{\Conj}{\operatorname{Conj}}
\newcommand{\Core}{\operatorname{Core}}

\newcommand{\Inn}{\operatorname{Inn}}
\newcommand{\Hol}{\operatorname{Hol}}

\newcommand{\T}{\operatorname{T}}

\newcommand{\R}{\operatorname{R}}

\newcommand{\Z}{\operatorname{Z}}

\newcommand{\id}{\mathrm{id}}

\newcommand{\conj}{\triangleright}

\begin{document}
\title{Constructions and automorphisms of quandles}
\author{Valeriy G. Bardakov}
\author{Birama Sangare}

\date{\today}

\address{Sobolev Institute of Mathematics, 4 Acad. Koptyug avenue, 630090, Novosibirsk, Russia.}
\email{bardakov@math.nsc.ru}

\address{Novosibirsk State University, 2 Pirogova Street, 630090, Novosibirsk, Russia.}
\email{b.sangare@g.nsu.ru}

\subjclass[2020]{Primary 20N02; Secondary 57K12}
\keywords{Conjugation quandle, core quandle, generalized Alexander quandle, commutative quandle, medial quandle, medialization, trivialization, commutativization}

\begin{abstract}
We prove that if $M$ is an abelian group and $X=\Alex(M,\varphi)$ is a Latin Alexander quandle
 with $\id-\varphi\in\Aut(M)$, then  $\Aut(X)\cong M\rtimes C_{\Aut(M)}(\varphi)$,
  $\Inn(X)\cong M\rtimes\langle\varphi\rangle$ and
  $\Out(X)\cong C_{\Aut(M)}(\varphi)/\langle\varphi\rangle$.
  As a consequence, we obtain a description of the automorphism groups of direct
  products of the cyclic commutative medial quandles  $C_{2n+1}$ and the dihedral quandles $R_{2n+1}$, and we find their
  orders. 

Furthermore, we correct a result of Bardakov--Fedoseev [Algebra and Logic \textbf{63} (2024), no.~2, 111--142] on multiplication of quandles
and prove that the Borodin--Neshchadim--Simonov and Markhinina--Nasybullov quandles can be constructed
 from conjugation, core, and generalized Alexander quandles, using the operation of multiplication of quandles.

Finally, we consider the quotients of a quandle $Q$, namely the medialization $Q^{\mathrm{med}}$, the commutativization $Q^{\mathrm{com}}$, and the trivialization $Q^{\mathrm{tr}}$.
 We prove that the conjugation quandle $\Conj(G)$ is medial if and only if   $\gamma_2(G)\le Z(G)$; we find bounds for $|\Conj(G)^{\mathrm{med}}|$ and prove that  $\Core(G)^{\mathrm{com}}\cong\Core(G/\langle\!\langle g^3\rangle\!\rangle)$. We describe commutative generalized Alexander quandles and, as a consequence, find the commutativization of the dihedral quandle: $R_n^{\mathrm{com}}\cong R_{\gcd(n,3)}$.

\end{abstract}

\maketitle
\section{Introduction}

A quandle is an algebraic system with a single binary operation that satisfies axioms encoding the three Reidemeister moves of planar diagrams of links in 3-space. These objects appear in diverse areas of mathematics, namely knot theory \cite{Joyce, Matveev}, group theory, set-theoretic solutions to the Yang--Baxter equations and Yetter--Drinfeld modules \cite{Eisermann}, Riemannian symmetric spaces \cite{Loos}, and Hopf algebras \cite{Andruskiewitsch}, to name a few. Though already studied under different guises in the literature, the study of these objects gained momentum after the foundational works of Joyce \cite{Joyce} (see also \cite{Joyce-Thesis}) and Matveev \cite{Matveev}, who showed that the knot quandle is a complete invariant of tame knots up to a simultaneous reversal of the orientations of the knot and the ambient 3-space.
 Although link quandles are strong invariants, it is difficult to check whether two quandles are isomorphic. This motivated a search for new properties and invariants of quandles themselves. We refer the reader to the articles \cite{Carter, Kamada, Nelson} for more on the historical development of the subject.

 Jedli\v{c}ka, Pilitowska, Stanovsk\'y, and Zamojska-Dzienio \cite{PADA} developed a structural theory of medial quandles using affine meshes, proving that every medial quandle is built from affine pieces via a heterogeneous affine structure. More recently, Szymik observed that the abelianization of the \emph{structure group} of a quandle is the free abelian group on its set of orbits, and computed this for Thompson's quandle \cite{MS}. This is a different invariant from the one introduced in Section~\ref{sec-quotients}.
 

Automorphisms of quandles, which reveal a lot about their internal structures, have been investigated in great detail in a series of papers \cite{BDS, BarTimSin, Elhamdadi2012}. By fusing ideas from combinatorial group theory into quandles, the works \cite{BSS1, BSS2} show that free quandles and link quandles are residually finite.

Commutative quandles form a class of quandles that enjoy rich algebraic properties \cite{D}. In \cite{BE}, a problem was formulated to describe finitely generated commutative quandles. The automorphism groups of commutative quandles reveal a complex interplay between symmetry and geometry. In particular, for medial commutative quandles, the automorphism group provides a direct link to the theory of modules, since these quandles are often isomorphic to affine structures over commutative rings \cite{LT, M}.

Medial quandles are studied in \cite{PADA}. It was shown in \cite{LT} that the theory of finitely generated commutative medial quandles is similar to the theory of finitely generated abelian groups. Also, in this paper an answer was given to the question from \cite{BE}. It was proven that every cyclic commutative medial quandle is isomorphic to an Alexander quandle $\Alex(M,\varphi)$ such that the map $\varphi$ is multiplication by $1/2$, where $M$ is an additive abelian group containing $1/2$.

The automorphism group of a general Alexander quandle was described by Hou \cite{Hou}: when $\id-\varphi$ is an automorphism of $M$, one has $\Aut(\Alex(M,\varphi))\cong M\rtimes C_{\Aut(M)}(\varphi)$, where $C_{\Aut(M)}(\varphi)$ is the centralizer of $\varphi$ in $\Aut(M)$. In this paper we revisit this result, describe explicitly the groups $\Aut$, $\Inn$, and $\operatorname{Out}$, and apply the description to direct products of cyclic commutative medial quandles and of dihedral quandles.

In \cite[Corollary 10.4]{Joyce}, it was proved that the two-generated free involutory quandle is isomorphic to $\Core(\mathbb{Z})$, with generators $0$ and $1$, and in \cite[Theorem 10.5]{Joyce}, a structure of a free medial involutory quandle with $n+1$ generators was found.

This paper is organized in the following way. In Section \ref{sec-prelim}, some basic definitions and examples of the theory of quandles are recalled.

In Section \ref{Sec3}, we explicitly describe the automorphism group, the inner automorphism group, and the outer automorphism group of a Latin Alexander quandle by revisiting the foundational result established by Hou \cite[Theorem 3.2]{Hou}. We prove that if $M$ is an abelian group and
 $X=\Alex(M,\varphi)$ is a Latin Alexander quandle with $\id-\varphi\in\Aut(M)$, then $\Aut(X)\cong M\rtimes C_{\Aut(M)}(\varphi)$,
  $\Inn(X)\cong M\rtimes\langle\varphi\rangle$ and
  $\Out(X)\cong C_{\Aut(M)}(\varphi)/\langle\varphi\rangle$ (Theorem~3.2).
  As a consequence, we obtain a description of the automorphism groups of direct
  products of the quandles $C_{2n+1}$ and $R_{2n+1}$ (Corollary~3.4) and of their
  orders (via the Hillar--Rhea formula). These results generalize some results from  \cite{BDS, Elhamdadi2012}.
We prove that $C_\infty$ and $C_{2n+1}$ are generated by two elements
  (Proposition~3.1).

In Section \ref{sec-constructions}, we investigate the multiplication of quandle structures defined on a set, introduced by Bardakov and Fedoseev \cite{BF}. Through two independent counterexamples, we show that neither the original distributivity condition nor its mirror counterpart is sufficient on its own to guarantee that the composition yields a quandle. We verify their joint sufficiency in Proposition \ref{Proposition 4.1}, while establishing that these combined relations are not strictly necessary to obtain a valid quandle structure.
 In \cite{BNS,MN}, new constructions of quandles from groups were introduced. Using the multiplication of quandle structures, we prove that these quandles can be constructed from the conjugation quandle $\operatorname{Conj}(G)$, the core quandle $\operatorname{Core}(G)$, and the generalized Alexander quandle $\operatorname{Alex}(G,\varphi)$, where $\varphi$ is an automorphism of $G$ (Theorem \ref{Theorem 4.5}).

  The medialization of a quandle is a standard construction in universal algebra \cite{D, JPZ}.
In Section \ref{sec-quotients} we investigate quotients of quandles: medialization, commutativization
  and trivialization.    We prove that $\Conj(G)$ is medial if and only if
  $\gamma_2(G)\le Z(G)$; we find a bound for $|\Conj(G)^{\mathrm{med}}|$  (Theorem~5.4).
The commutativity of $\Core(G)$ was investigated by Ta \cite{LT1} for groups of exponent $3$. Our Theorem~\ref{theorem 5.6}(2) generalizes this characterization to arbitrary groups. We prove that $\Core(G)^{\mathrm{com}}\cong\Core(G/\langle\!\langle g^3\rangle\!\rangle)$.
As a direct consequence, Corollary~\ref{corollary 5.8} establishes the isomorphism $\R_n^{com}\cong \R_{\gcd(n,3)}$ for dihedral quandles.

 We conclude by formulating some questions for future research (see Section \ref{questions}).

\section{Preliminaries}\label{sec-prelim}

The main facts about racks and quandles can be found in \cite{BES}. A \textit{quandle} is a non-empty set $Q$ with a binary operation $(x,y) \mapsto x * y$ satisfying the following axioms:
\begin{enumerate}
\item[(Q1)] $x * x = x$ for all $x \in Q$ (idempotency),
\item[(Q2)] for any $x,y \in Q$ there exists a unique $z \in Q$ such that $x=z*y$ (right-invertibility),
\item[(Q3)] $(x * y) * z = (x * z) * (y * z)$ for all $x, y, z \in Q$ (right self-distributivity).
\end{enumerate}

 For all $x \in Q$, the right-multiplication map $S_x \colon Q \to Q$ defined by $y \mapsto y\ast x$ is an automorphism.
These automorphisms are called inner automorphisms, and the group generated by all such automorphisms is denoted by $\Inn(Q)$.

An algebraic system satisfying axioms $Q2$ and $Q3$ is called a \textit{rack}. Many interesting examples of quandles come from groups. We give some examples.

\begin{itemize}
\item A quandle $Q$ is called \textit{trivial} if $x*y=x$ for all $x, y \in Q$. Unlike groups, a trivial quandle can have an arbitrary number of elements. We denote the trivial quandle on $n$ elements by $\T_n$.
\item If $G$ is a group, then the binary operation $x \conj y = y^{-1} x y$ turns $G$ into the quandle $\Conj(G)$ called the \textit{conjugation quandle} of $G$.
\item A group $G$ with the binary operation $x*y= y x^{-1} y$ turns the set $G$ into the quandle $\Core(G)$ called the \textit{core quandle} of $G$. In particular, if $G= \mathbb{Z}_n$, the cyclic group of order $n$, then it is called the \textit{dihedral quandle} and is denoted by $\R_n$.
\item Let $G$ be a group and $\varphi \in \Aut(G)$. Then the set $G$ with binary operation $x * y = \varphi(xy^{-1})y$ forms a quandle $\Alex(G, \varphi)$ called the \textit{generalized Alexander quandle} of $G$ with respect to $\varphi$. If $G$ is an additive abelian group, the quandle $\Alex(G, \varphi)$ defined by
\[
x\ast y= \varphi(x)+(\id-\varphi)(y),
\]
is called an \textit{Alexander quandle}.
\end{itemize}

A quandle is said to be \textit{connected} if it admits a transitive action by its group of inner automorphisms. For example, dihedral quandles of odd order are connected, whereas those of even order are disconnected.

Let us recall some special classes of quandles.
\begin{enumerate}
\item[(1)] A quandle $(Q,\ast)$ is \textit{commutative} if $x\ast y=y\ast x$ for any $x, y \in Q$. For example, the dihedral quandle $\R_3$ is commutative.
\item[(2)] A quandle $(Q,\ast)$ is called \textit{medial} (equivalently \textit{entropic}, or \textit{abelian in the sense of Kurosh}) if $(x\ast y)\ast(t\ast z)=(x\ast t)\ast(y\ast z)$ for any $x, y, t, z \in Q$.

In the sense of Kurosh \cite{K}, medial quandles are exactly the abelian algebras (entropic algebras) in universal algebra. In this paper we use the term ``medial quandle'' exclusively, to avoid any confusion with the notion of abelianity in the commutator theory. In particular, any Alexander quandle is medial.

For reference, we recall that an algebra $A$ of signature $\Omega$ is abelian in the sense of Kurosh if and only if the following two conditions hold in $A$:

a) for any $\omega \in \Omega_n$, $\omega' \in \Omega_s$, $n, s \geq 1$,
\[
(x_{11} \ldots x_{1n} \omega) \ldots (x_{s1} \ldots x_{sn} \omega) \omega' = (x_{11} \ldots x_{s1} \omega') \ldots (x_{1n} \ldots x_{sn} \omega') \omega,
\]
where $x_{ij}$, $1 \leq i \leq s$, $1 \leq j \leq n$ is a matrix of variables;

b) all $\omega \in \Omega_0$ fix one element in $A$ and this element forms an $\Omega$-subalgebra of $A$.

\item[(3)] Let $C$ be a ring of dyadic rationals $\mathbb{Z}[\frac{1}{2}]$ or a cyclic group $\mathbb{Z}_{2n+1}$ of order $2n+1$. Then $(C; *)$, where $x\ast y=\frac{1}{2}(x+y)$, is a quandle. Moreover, this quandle is commutative and medial. We will call $(C; *)$ a cyclic commutative medial quandle. Note that Luc Ta \cite{LT} called this quandle \textit{midpoint}. If $C = \mathbb{Z}[\frac{1}{2}]$, then we denote this quandle by $C_{\infty}$. If $C = \mathbb{Z}_{2n+1}$, then we denote this quandle by $C_{2n+1}$. One can see that we have $2^{-1}=n+1$ in $\mathbb{Z}_{2n+1}$ and, hence, the operation in $C_{2n+1}$ can be defined by the rule $x\ast y = (n+1)(x+y)$ for $x,y \in \mathbb{Z}_{2n+1}$.
\item[(4)] A quandle $Q$ is called \textit{Latin} if left multiplication by any element of $Q$ yields a bijective map on $Q$, that is, the map $L_x \colon Q \to Q$ defined by $L_x(y) = x *y$ is a bijection for each $x \in Q$. For example, if $M$ is an additive abelian group, then the Alexander quandle $\operatorname{Alex}(M,\varphi)$ is Latin if and only if $\id -\varphi$ is an automorphism of $M$. Note that every commutative quandle is Latin.
\end{enumerate}

Further, we will use the following well-known proposition, which can be found in \cite{TP}, \cite{PADA}, and in \cite[Proposition 4.1]{LT}.

\begin{proposition}[\cite{TP, PADA, LT}]\label{Proposition 2.3}
Let $Q$ be a medial Latin quandle. Then there exist an additive abelian group $M$ and an automorphism $\varphi\in\Aut(M)$ such that $\id-\varphi\in\Aut(M)$ and
\[
Q\cong\Alex(M,\varphi).
\]
\end{proposition}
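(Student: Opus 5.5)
The plan is the classical Toyoda--Bruck approach: pick a base point, build an abelian group on $Q$, and read off $\varphi$ from right multiplication by that point. Fix $e\in Q$. Write $R_e=S_e\colon y\mapsto y\ast e$, which is a bijection by (Q2), and $L_e\colon y\mapsto e\ast y$, which is a bijection since $Q$ is Latin. Define a new operation on $Q$ by
\[
x+y = R_e^{-1}(x)\ast L_e^{-1}(y).
\]
Then $e+y=R_e^{-1}(e)\ast L_e^{-1}(y)=e\ast L_e^{-1}(y)=y$, because $R_e(e)=e$ by (Q1). Similarly $x+e=x$, so $e$ is a two-sided identity, and left and right cancellation follow from the Latin property. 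This makes $(Q,+,e)$ a loop.

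The key step, and the expected main obstacle, is showing that mediality forces this loop to be an associative, commutative group. The first ingredients are:
\begin{itemize}
\item By mediality, $R_e$ and $L_e$ are endomorphisms of $(Q,\ast)$. For example, $(x\ast y)\ast e=(x\ast y)\ast(e\ast e)=(x\ast e)\ast(y\ast e)$, and the same computation works for $L_e$.
\item Mediality then gives the exchange identity $(a\ast b)\ast(c\ast d)=(a\ast c)\ast(b\ast d)$ for the translated arguments.
\end{itemize}
From these I would derive the identity $(x+y)+(z+w)=(x+z)+(y+w)$. Setting $x=w=e$ gives $y+z=z+y$, that is, commutativity. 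Setting $z=e$ instead gives $(x+y)+w=x+(y+w)$, that is, associativity. This is exactly the Toyoda argument, and I expect the bookkeeping of where $R_e^{-1}$ and $L_e^{-1}$ land to be the delicate part. If it becomes messy, I would instead cite the standard Bruck--Toyoda theorem for medial quasigroups, which gives the same conclusion.

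Next, $R_e$ and $L_e$ are additive for $+$. This follows from their being $\ast$-endomorphisms that fix $e$ and commute with each other, where the commuting comes from mediality: $(e\ast x)\ast e=(e\ast e)\ast(x\ast e)=e\ast(x\ast e)$. So $\varphi:=R_e$ and $\psi:=L_e$ are commuting automorphisms of $M=(Q,+)$. Unwinding the definition of $+$ gives
\[
x\ast y=\varphi(x)+\psi(y).
\]
Idempotency (Q1) then says $x=\varphi(x)+\psi(x)$, hence $\psi=\id-\varphi$. Since $\psi=L_e$ is bijective, $\id-\varphi\in\Aut(M)$.

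Therefore $x\ast y=\varphi(x)+(\id-\varphi)(y)$, which means the identity map $Q\to\Alex(M,\varphi)$ is an isomorphism of quandles.
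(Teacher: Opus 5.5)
Your proof is correct and is the classical Toyoda--Bruck argument; the paper itself gives no proof, it only cites this result as the Toyoda--Bruck theorem for medial Latin quandles. One small fix in ordering: the commutation $R_eL_e=L_eR_e$, which you derive only in the additivity step, is already needed in the derivation of $(x+y)+(z+w)=(x+z)+(y+w)$ to match the cross terms $R_e^{-1}L_e^{-1}(y)$ with $L_e^{-1}R_e^{-1}(y)$ and $R_e^{-1}L_e^{-1}(z)$ with $L_e^{-1}R_e^{-1}(z)$, so you should establish it first.
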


This is the Toyoda--Bruck theorem for medial Latin quandles. By this proposition, every cyclic commutative medial quandle is isomorphic to an Alexander quandle $\Alex(M, \varphi)$ in which $M$ is an additive abelian group containing $\frac{1}{2}$ and the map $\varphi = \hat{\frac{1}{2}}$ is the multiplication by $\frac{1}{2}$.

\section{Automorphism groups of Latin Alexander quandles }\label{Sec3}

The structural investigation of automorphism groups of affine and Alexander quandles builds upon important developments in algebraic quandle theory. In a foundational result, Hou \cite{Hou} determined the full automorphism group of general affine quandles, characterizing their structure as split extensions over the underlying abelian module. In this section, we build upon Hou's framework to detail the automorphism, inner automorphism, and outer automorphism groups of Latin Alexander quandles. We then apply this algebraic description to determine the automorphism groups and evaluate the exact orders for direct products of cyclic commutative medial quandles $C_{2n+1}$ and dihedral quandles $\R_{2n+1}$.

At first, we prove that $C_{\infty}$ and $C_{2n+1}$ are $2$-generated quandles. We note that the subquandle $\langle 0,1\rangle$ is, by convention, closed under
the inverse operation $*^{-1}$: we define the subquandle generated by a subset as the
smallest subset closed under both $*$ and $*^{-1}$.

\begin{proposition}\label{Proposition 3.2}
Let $C_{\infty}$ and $C_{2n+1}$ be the cyclic commutative medial quandles defined above. Then

1) The map $\pi \colon C_{\infty} \to C_{2n+1}$ defined by $\pi \left(\frac{a}{2^k}\right) = \overline{a(n+1)^k}$ is a well-defined \textit{epimorphism} of quandles.

2) $C_{\infty} = \langle 0, 1 \rangle$ and $C_{2n+1} = \langle \bar{0}, \bar{1} \rangle$.
\end{proposition}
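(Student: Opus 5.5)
For part 1), I will first note that $\pi$ is the ring homomorphism $\mathbb{Z}[\frac12]\to\mathbb{Z}_{2n+1}$ extending $\mathbb{Z}\to\mathbb{Z}_{2n+1}$. This extension exists because $2$ is invertible in $\mathbb{Z}_{2n+1}$, with $2^{-1}=n+1$.

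Well-definedness is checked directly. If $\frac{a}{2^k}=\frac{b}{2^l}$ with, say, $l\ge k$, then $b=2^{l-k}a$. Hence
\[
\overline{b(n+1)^l}=\overline{a\,2^{l-k}(n+1)^{l-k}(n+1)^k}=\overline{a(n+1)^k},
\]
using $2(n+1)\equiv 1 \pmod{2n+1}$. Additivity follows by writing two elements over a common denominator $2^k$. Then $\pi(x\ast y)=\pi(\frac12(x+y))=(n+1)(\pi(x)+\pi(y))=\pi(x)\ast\pi(y)$, since $\pi(\frac12)=\overline{n+1}$. Surjectivity holds because $\pi(a)=\bar a$ for integers $a$.

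For part 2), it suffices to prove $C_\infty=\langle 0,1\rangle$. The second statement then follows, because a quandle epimorphism maps the subquandle generated by $S$ onto the subquandle generated by $\pi(S)$, and $\pi(0)=\bar0$, $\pi(1)=\bar1$.

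Let $Q=\langle 0,1\rangle$. In $C_\infty$ the inverse operation is $x\ast^{-1}y=2x-y$, because $z\ast y=x$ means $\frac12(z+y)=x$. I will proceed in three steps.

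\textbf{Integers.} From $0,1$ we get $0\ast^{-1}1=-1$ and $1\ast^{-1}0=2$. By induction, $k+1=k\ast^{-1}(k-1)$ and $-(k+1)=(-k)\ast^{-1}(-k+1)$, so $\mathbb{Z}\subseteq Q$.

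\textbf{Dyadic rationals.} I will show by induction on $k$ that every $\frac{a}{2^k}$ lies in $Q$. The case $k=0$ is the integer step. Suppose all elements with denominator $2^{k}$ lie in $Q$, and take $a$ odd. Then $\frac{a}{2^{k+1}}=\frac{a-1}{2^{k+1}}+\frac{1}{2^{k+1}}$ is not directly helpful. Instead, I use
\[
\frac{a}{2^{k+1}}=\frac12\left(\frac{a-1}{2^{k+1}}\cdot 2\cdot\tfrac12+\dots\right),
\]
or more simply
\[
\frac{a}{2^{k+1}}=\frac{a-1}{2^{k+1}}\ast\frac{a+1}{2^{k+1}}\cdot\ldots
\]
Cleanest is to write $\frac{a}{2^{k+1}}=\frac{x}{2^k}\ast\frac{y}{2^k}$ with $x+y=a$, for example $x=0$ and $y=a$. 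This gives $\frac{a}{2^{k+1}}=0\ast\frac{a}{2^k}\in Q$.

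\textbf{Conclusion.} Together these steps show every dyadic rational is in $Q$, so $Q=C_\infty$.

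I expect no real obstacle. The only delicate point is well-definedness in part 1), which is handled by the identity $2(n+1)\equiv1$. The remaining point is remembering that generation uses $\ast^{-1}$, which is what produces the integers outside $[0,1]$.
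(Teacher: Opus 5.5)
Your proposal is correct and follows the paper's proof essentially step for step: $\pi$ is the unital ring map sending $\tfrac12$ to $n+1$, well-definedness comes from $2(n+1)\equiv 1 \pmod{2n+1}$, and the integers are obtained via $\ast^{-1}$. Your recursion $k+1=k\ast^{-1}(k-1)$ is only a different bookkeeping of the paper's $2m$, $2m-1$, $-m$; halving via $\frac{a}{2^{k+1}}=0\ast\frac{a}{2^k}$ matches the paper's $\frac{a}{2^k}\ast 0$ by commutativity; and the conclusion $C_{2n+1}=\pi(\langle 0,1\rangle)=\langle\bar0,\bar1\rangle$ is the same. In a final write-up, delete the abandoned attempts in the dyadic step and keep only that last identity.
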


\begin{proof}

1) We are considering the unital commutative and associative rings
\[
\mathbb{Z}\left[\frac{1}{2}\right] = \left\{ \frac{a}{2^k}~|~a \in \mathbb{Z}, k \in \mathbb{N}_0\right\},
\]
and
\[
\mathbb{Z}_{2n+1} = \{ \bar{0}, \bar{1}, \ldots, \overline{2n} \}
\]
with the standard ring operations. Since $2$ is invertible in $\mathbb{Z}_{2n+1}$, there is a
unique unital ring homomorphism $\mathbb{Z}[\tfrac12]\to\mathbb{Z}_{2n+1}$, sending $\tfrac12$ to
$n+1$. This map is the map $\pi$ of the statement.

\emph{Well-definedness of $\pi$.} If $a/2^k=b/2^l$ in $\mathbb{Z}[\tfrac12]$, assume without loss of generality that $k\ge l$; then $a=b\cdot 2^{k-l}$, so
\[
a(n+1)^k = b\cdot 2^{k-l}(n+1)^k = b(n+1)^l\cdot\bigl(2(n+1)\bigr)^{k-l}.
\]
Since $2(n+1) = 2n+2 \equiv 1 \pmod{2n+1}$, it follows that $\bigl(2(n+1)\bigr)^{k-l} \equiv 1 \pmod{2n+1}$. Hence, we obtain $\overline{a(n+1)^k} = \overline{b(n+1)^l}$, proving that $\pi$ is well-defined.

\emph{Additivity, multiplicativity, compatibility with $*$.} These follow directly from the fact that $\pi$ is a unital ring homomorphism and from the definition of the operational law $x*y = \tfrac12(x+y)$ on the quandle components.

So $\pi$ is a quandle homomorphism.

To establish the surjectivity of $\pi$, we observe that the restriction of $\pi$ to the subring of integers $\mathbb{Z} \subset \mathbb{Z}[\tfrac{1}{2}]$ is exactly the canonical quotient map $m \mapsto m \pmod{2n+1}$. Since this canonical projection from $\mathbb{Z}$ onto $\mathbb{Z}_{2n+1}$ is surjective, the standard inclusion implies that $\pi(\mathbb{Z}[\tfrac{1}{2}])$ covers the entire target ring, proving that $\pi$ is surjective.

2) We denote the quandle generated by $0$ and $1$ by $\langle 0, 1\rangle$ with the same operation in $C_{\infty}$. The inverse operation $*^{-1}$ in $C_{\infty}$ is defined by the rule $x *^{-1} y = 2 x - y$. We need to prove $C_{\infty}\subset \langle 0, 1\rangle$.

\emph{Step 1: $\Z\subseteq\langle 0,1\rangle$.} We have $0\ast^{-1}1=-1$, so $-1\in\langle 0, 1\rangle$; $1\ast^{-1}0=2$, so $2\in\langle 0, 1\rangle$; $0\ast^{-1}2=-2$, so $-2\in\langle 0, 1\rangle$; $2\ast^{-1}1=3$, so $3\in\langle 0, 1\rangle$. We argue by strong induction on $|m|$: if $m\in\langle 0, 1\rangle$, then
\[
2m=m\ast^{-1}0\in\langle 0, 1\rangle,\qquad 2m-1=m\ast^{-1}1\in\langle 0, 1\rangle,\qquad -m=0\ast^{-1}m\in\langle 0, 1\rangle.
\]
This covers all integers.

\emph{Step 2: dyadic rationals.} If $a/2^m\in\langle 0,1\rangle$, then
\[
\frac{a}{2^{m+1}}=\frac{a}{2^m}\ast 0\in\langle 0, 1\rangle.
\]
By induction on $m$, all dyadic rationals lie in $\langle 0, 1\rangle$.

Since $0,1\in C_\infty$, we get $C_\infty=\langle 0, 1\rangle$. Finally, by (1),
\[
C_{2n+1} =\pi(C_{\infty})=\pi(\langle 0, 1\rangle)=\langle \pi(0), \pi(1)\rangle =\langle \bar{0}, \bar{1}\rangle.
\]
\end{proof}

Let $\operatorname{Aut}(M)$ be the group of automorphisms of an abelian group $M$. Let us consider the semidirect product $M \rtimes \operatorname{Aut}(M)$, in which multiplication is defined by the rule
\begin{equation} \label{product}
(m_1,\varphi_{1}) \cdot (m_2,\varphi_{2}) = (m_1 + \varphi_1(m_2), \varphi_{1}\varphi_{2})
\end{equation}
for all $m_1, m_2 \in M$ and $\varphi_{1}, \varphi_{2} \in \operatorname{Aut}(M)$. This group $M \rtimes \operatorname{Aut}(M)$ is called the holomorph of $M$ and is denoted by $\Hol(M)$.

Let $X=\operatorname{Alex}(M, \varphi)$ be an Alexander quandle, in which $\id-\varphi$ is an automorphism of $M$. In this case $X$ is a Latin (hence connected) quandle (see \cite{S}). Let $C_{\operatorname{Aut}(M)}(\varphi)$ be the group of automorphisms of $M$ that commute with $\varphi$. Then we consider the semidirect product $\Hol_{\varphi}(M) = M \rtimes C_{\operatorname{Aut}(M)}(\varphi)$, in which multiplication is defined by the rule (\ref{product}). We introduce the set $H$ of all translations by elements of $M$, i.e.,
\[
H=\{t_{m}: M\to M\,\,|\,\,m\in M,\ t_{m}(x)=x+m,\ x\in M \}\cong M.
\]

The multiplication in the semidirect product $H\rtimes \langle \varphi \rangle$ is defined by the rule
\begin{equation} \label{product1}
(t_{m_1},\varphi^k) \cdot (t_{m_2},\varphi^l) = (t_{m_1} + \varphi^k(m_2), \varphi^{k+l})
\end{equation}
for all $m_1, m_2 \in M$ and $k, l \in \mathbb{Z}$, which follows from (\ref{product}).

\begin{theorem}  \label{Theorem 3.2}
Let $X=\operatorname{Alex}(M, \varphi)$ be an Alexander quandle, in which $\id-\varphi$ is an automorphism of $M$. Then

1) $\operatorname{Aut}(X) \cong \Hol_{\varphi}(M)$.

2) Under the isomorphism in 1), the inner automorphism group $\Inn(X)$ corresponds to the specific subgroup:
\[
\Inn(X) = H \rtimes \langle \varphi \rangle.
\]
3) $\operatorname{Out}(X)\cong C_{\Aut(M)}(\varphi)/\langle \varphi \rangle$.
\end{theorem}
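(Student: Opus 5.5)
We realize everything inside the affine group of $M$. Each pair $(m,\psi)\in \Hol(M)$ acts on $M$ by $x\mapsto \psi(x)+m$, and the rule (\ref{product}) is exactly composition of these affine maps. The plan is to show that the automorphisms of $X$ are precisely the affine maps whose linear part commutes with $\varphi$. Then we locate the inner automorphisms inside that group and take the quotient.

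\emph{Step 1 (affine maps with $\psi\in C_{\Aut(M)}(\varphi)$ are automorphisms).} Let $f(x)=\psi(x)+m$ with $\psi\varphi=\varphi\psi$. Then
\[
f(x\ast y)=\psi\varphi(x)+\psi(\id-\varphi)(y)+m,
\]
while $f(x)\ast f(y)=\varphi\psi(x)+(\id-\varphi)\psi(y)+\varphi(m)+(\id-\varphi)(m)$. These agree, so $f\in\Aut(X)$. Since the map $(m,\psi)\mapsto f$ is a group homomorphism and is clearly injective, it embeds $\Hol_\varphi(M)$ into $\Aut(X)$.

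\emph{Step 2 (every automorphism has this form).} This is the main step. Given $f\in\Aut(X)$, set $m=f(0)$ and $g=t_{-m}\circ f$. Translations are automorphisms by Step 1 with $\psi=\id$, so $g\in \Aut(X)$ and $g(0)=0$. Putting $y=0$ and then $x=0$ in $g(x\ast y)=g(x)\ast g(y)$ gives
\[
g\varphi=\varphi g, \qquad g(\id-\varphi)=(\id-\varphi)g.
\]
For arbitrary $x,y$ we can write $x=\varphi(a)$ and $y=(\id-\varphi)(b)$, because both $\varphi$ and $\id-\varphi$ are bijective; this is where the Latin hypothesis enters. Then
\[
g(x+y)=g(a\ast b)=g(a)\ast g(b)=\varphi g(a)+(\id-\varphi)g(b)=g\varphi(a)+g(\id-\varphi)(b)=g(x)+g(y).
\]
So $g$ is additive and bijective, hence $g\in C_{\Aut(M)}(\varphi)$. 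Therefore $f=t_m\circ g$ comes from $(m,g)$, and together with Step 1 this proves 1).

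\emph{Step 3 (inner automorphisms).} We compute $S_y(x)=\varphi(x)+(\id-\varphi)(y)$, which corresponds to the element $((\id-\varphi)(y),\varphi)$. Since $\id-\varphi$ is surjective, the $S_y$ are exactly the elements $(m,\varphi)$ for $m\in M$. The quotients $S_yS_0^{-1}$ are then all translations $(m,\id)$, so $H\le\Inn(X)$. Also $S_0=(0,\varphi)$ lies in $\Inn(X)$, giving $\Inn(X)\supseteq H\rtimes\langle\varphi\rangle$. Conversely, every generator $S_y=t_{(\id-\varphi)(y)}S_0$ lies in $H\rtimes\langle\varphi\rangle$, which proves equality and hence 2).

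\emph{Step 4 (outer automorphisms).} $H$ is normal in $\Hol_\varphi(M)$, and $\Hol_\varphi(M)/H\cong C_{\Aut(M)}(\varphi)$ via projection onto the second coordinate. The subgroup $\Inn(X)=H\rtimes\langle\varphi\rangle$ contains $H$ and projects onto $\langle\varphi\rangle$, which is central, hence normal, in $C_{\Aut(M)}(\varphi)$. By the third isomorphism theorem,
\[
\Out(X)=\Aut(X)/\Inn(X)\cong C_{\Aut(M)}(\varphi)/\langle\varphi\rangle,
\]
which is 3).
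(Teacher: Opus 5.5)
Your proof is correct. For parts 2) and 3) it follows the paper's argument. The paper also factors $S_a=t_{(\id-\varphi)(a)}\circ\varphi$ and obtains all translations from $S_a\circ\varphi^{-1}$ via surjectivity of $\id-\varphi$. It then gets $\Out(X)$ by applying the first isomorphism theorem to $\Phi(m,\sigma)=\sigma\langle\varphi\rangle$, which is equivalent to your third-isomorphism-theorem step.

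The real difference is part 1). There the paper gives no argument and simply cites Hou's Proposition 2.1, whereas you prove it directly:
\begin{enumerate}
\item you normalize $f(0)=0$ by composing with a translation;
\item you read off $g\varphi=\varphi g$ and $g(\id-\varphi)=(\id-\varphi)g$ from the cases $y=0$ and $x=0$;
\item you deduce additivity from $x+y=\varphi^{-1}(x)\ast(\id-\varphi)^{-1}(y)$.
\end{enumerate}

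Your route buys three things. It makes the theorem self-contained. It shows exactly where the Latin hypothesis is used: surjectivity of $\id-\varphi$ in the additivity step, and again for $H\le\Inn(X)$. And it supplies the explicit identification $(m,\psi)\mapsto\bigl(x\mapsto\psi(x)+m\bigr)$ of $\Hol_\varphi(M)$ with $\Aut(X)$, which part 2) tacitly relies on when it treats $t_m$ and $\varphi$ as elements of $\Aut(X)$. The citation is shorter but leaves that identification unstated.
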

\begin{proof}
1) See \cite[Proposition 2.1]{Hou}.

2) We first show that $\langle \varphi \rangle$ normalizes $H$ within $\Aut(X)$. Indeed, for any $t_m \in H$ and $x \in M$, we have:
\[
(\varphi \circ t_m \circ \varphi^{-1})(x) = \varphi(\varphi^{-1}(x) + m) = x + \varphi(m) = t_{\varphi(m)}(x).
\]
Thus, $\varphi t_m \varphi^{-1} = t_{\varphi(m)} \in H$, which implies that $H$ is normalized by $\langle \varphi \rangle$. Consequently, the set $H \rtimes \langle \varphi \rangle$ forms a well-defined subgroup of $\Aut(X)$.

Next, we show that the inner automorphism group $\Inn(X)$ is contained in $H \rtimes \langle \varphi \rangle$. The group $\Inn(X)$ is generated by the right multiplications $S_a$ and their inverses $S_a^{-1}$ for $a \in M$. For any $a \in M$, the right multiplication is given by $S_a(x) = x * a = \varphi(x) + (\id - \varphi)(a)$, which can be factored as:
\[
S_a = t_{(\id - \varphi)(a)} \circ \varphi.
\]
Since $t_{(\id - \varphi)(a)} \in H$ and $\varphi \in \langle \varphi \rangle$, we have $S_a \in H \rtimes \langle \varphi \rangle$. Similarly, the inverse operation is $S_a^{-1}(x) = \varphi^{-1}(x) + (\id - \varphi^{-1})(a)$, which factors as:
\[
S_a^{-1} = t_{(\id - \varphi^{-1})(a)} \circ \varphi^{-1}.
\]
This shows that $S_a^{-1} \in H \rtimes \langle \varphi \rangle$ as well. Since $H \rtimes \langle \varphi \rangle$ is a subgroup of $\Aut(X)$ and contains all the generators of $\Inn(X)$, it follows that $\Inn(X) \le H \rtimes \langle \varphi \rangle$.

Conversely, we show the reverse inclusion. The identity $S_0(x) = \varphi(x)$ implies that $\varphi \in \Inn(X)$, hence $\langle \varphi \rangle \le \Inn(X)$. Furthermore, for any $a \in M$, the composition $S_a \circ \varphi^{-1}$ yields $S_a(\varphi^{-1}(x)) = x + (\id - \varphi)(a) = t_{(\id - \varphi)(a)}(x)$. Thus, $t_{(\id - \varphi)(a)} \in \Inn(X)$. Since $\id - \varphi$ is an automorphism of $M$, the map $\id - \varphi \colon M \to M$ is surjective, meaning that $\{(\id - \varphi)(a) \mid a \in M\} = M$. Therefore, every translation $t_m$ belongs to $\Inn(X)$, so $H \le \Inn(X)$.

It follows that $H \rtimes \langle \varphi \rangle \le \Inn(X)$. Combined with the previous inclusion, we conclude that $\Inn(X) = H \rtimes \langle \varphi \rangle$. Finally, since $H \cap \langle \varphi \rangle = \{\id\}$ (as any map $\varphi^n = t_m$ satisfies $\varphi^n(0) = 0 \implies m = 0$), this product is strictly internal and semidirect.

3) Since $H\cong M$ and $\langle\varphi\rangle$ is contained in the center of $C_{\Aut(M)}(\varphi)$, the quotient $C_{\Aut(M)}(\varphi)/\langle \varphi \rangle$ is a group. We have
\[
\operatorname{Out}(X)\cong \Hol_{\varphi}(M)/(H\rtimes \langle \varphi \rangle)\cong \Hol_{\varphi}(M)/(M\rtimes \langle \varphi \rangle).
\]
Let us define a surjection $\Phi\colon \Hol_{\varphi}(M) \to C_{\Aut(M)}(\varphi)/\langle \varphi \rangle$ by $\Phi(m, \sigma)=\sigma \langle \varphi \rangle$. For $(m_1,\sigma_1),(m_2,\sigma_2)\in \Hol_{\varphi}(M)$,
\[
\Phi((m_1,\sigma_1)(m_2,\sigma_2))= \Phi(m_1+\sigma_1(m_2),\sigma_1\sigma_2)=\sigma_1\sigma_2\langle \varphi \rangle=(\sigma_1\langle \varphi \rangle)(\sigma_2\langle \varphi \rangle),
\]
so $\Phi$ is a homomorphism. Its kernel is $\operatorname{Ker}\Phi=\{(m,\sigma)\,\,|\,\,\sigma\in\langle \varphi \rangle \}=M\rtimes\langle \varphi \rangle$. By the first isomorphism theorem,
\[
\Hol_{\varphi}(M)/(M\rtimes\langle \varphi \rangle)\cong C_{\Aut(M)}(\varphi)/\langle \varphi \rangle,
\]
i.e., $\operatorname{Out}(X)\cong C_{\Aut(M)}(\varphi)/\langle \varphi \rangle$.
This completes the proof.
\end{proof}

\begin{remark}\label{Remark 3.2}
It was proved in \cite[Theorem $2$.$1$]{Elhamdadi2012} that
\[
\operatorname{Aut}(\R_{n}) \cong \mathbb{Z}_{n}\rtimes\operatorname{Aut}(\mathbb{Z}_{n}) \cong \mathbb{Z}_{n}\rtimes\mathbb{Z}_{n}^{\times},
\]
where $\mathbb{Z}_{n}^{\times}$ is the group of units of the ring $\mathbb{Z}_{n}$, which acts on $\mathbb{Z}_{n}$ by multiplication. Any automorphism $f \in \operatorname{Aut}(\R_{n})$ acts on $\R_n$ by the rule
\[
f(x) = ax + b,\qquad a \in \mathbb{Z}_{n}^{\times}, \quad b \in \mathbb{Z}_{n}.
\]

We claim that this result can be extended to $C_{2n+1}$. Indeed, if we consider $\varphi$ as multiplication by $\frac{1}{2}$, then $\operatorname{Aut}(C_{2n+1}) \cong \mathbb{Z}_{2n+1} \rtimes \mathbb{Z}_{2n+1}^{\times}$ by Theorem \ref{Theorem 3.2}. Therefore, $\operatorname{Aut}(\R_{2n+1})$ is isomorphic to $\operatorname{Aut}(C_{2n+1})$.
\end{remark}

Now we write the automorphism group of a finite commutative medial quandle $Q$. By \cite[Theorem 6.4]{LT},
\[
Q\cong\prod_{i=1}^{m}C_{2n_i+1}.
\]
Let us consider the abelian group $M = \prod_{i=1}^{m}\mathbb{Z}_{2n_i+1}$. Since $2$ is coprime to each factor, the component-wise multiplication by $2^{-1} = n_i+1$ defines an automorphism $\hat{\frac{1}{2}}\colon M \to M$ given by
\[
\hat{\frac{1}{2}}(x_1,\cdots,x_m)=((n_1+1)x_1,\cdots,(n_m+1)x_m).
\]

Consequently, the direct product $Q$ identifies naturally with the Alexander quandle $\operatorname{Alex}(M; \hat{\frac{1}{2}})$ over the direct product of the groups. Thus, the following corollary holds.

\begin{corollary} \label{Corollary 3.4}
Let $Q=\prod_{i=1}^{m}C_{2n_i+1}$ be a finite commutative medial quandle and let $M=\prod_{i=1}^m\Z_{2n_i+1}$ and let $L = \operatorname{lcm}(2n_i+1)$ be the least common multiple of the cyclic group orders. Then
\begin{enumerate}
\item[(1)] $\operatorname{Aut}(Q)\cong M \rtimes \Aut(M)$.
\item[(2)] $\Inn(Q)\cong M \rtimes\langle \hat{\frac{1}{2}}\rangle$, where the cyclic subgroup $\langle \hat{\frac{1}{2}} \rangle$ has order equal to the multiplicative order of $2$ modulo $L$.
\item[(3)] $\operatorname{Out}(Q)\cong \Aut(M)/\langle \hat{\frac{1}{2}} \rangle$.
\end{enumerate}
\end{corollary}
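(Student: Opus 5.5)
We will derive all three statements from Theorem~\ref{Theorem 3.2}, applied to $X=\Alex(M,\hat{\frac12})$, where $M=\prod_{i=1}^m\mathbb{Z}_{2n_i+1}$. The plan has four steps.

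\textbf{Identify $Q$ with an Alexander quandle.} The map $\id-\hat{\frac12}$ acts on each coordinate as multiplication by $1-(n_i+1)=-n_i$. Modulo $2n_i+1$ we have $-n_i\equiv n_i+1=2^{-1}$, which is a unit. Hence $\id-\hat{\frac12}=\hat{\frac12}\in\Aut(M)$, and the Latin hypothesis of Theorem~\ref{Theorem 3.2} holds. The operation of $\Alex(M,\hat{\frac12})$ is
\[
x\ast y=\hat{\tfrac12}(x)+\hat{\tfrac12}(y)=\tfrac12(x+y)
\]
coordinatewise. This is exactly the product operation of $\prod C_{2n_i+1}$, so $Q=\Alex(M,\hat{\frac12})$ on the nose.

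\textbf{Prove $C_{\Aut(M)}(\hat{\frac12})=\Aut(M)$.} This is the key point and the only real step. Every group endomorphism $\sigma$ of the abelian group $M$ is $\mathbb{Z}$-linear, so it commutes with multiplication by any integer. On $M$, the map $\hat{\frac12}$ is multiplication by a single integer. Indeed, $L$ is odd, so $2$ is invertible mod $L$; choose $u\in\mathbb{Z}$ with $2u\equiv1\pmod L$. Then $u\equiv 2^{-1}\equiv n_i+1 \pmod{2n_i+1}$ for every $i$, since $2n_i+1\mid L$. Therefore $\hat{\frac12}=u\cdot\id_M$, and $\sigma\hat{\frac12}=\hat{\frac12}\sigma$ for every $\sigma\in\Aut(M)$. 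With this, part 1) of Theorem~\ref{Theorem 3.2} gives (1), and part 3) gives (3). The quotient in (3) makes sense because $\hat{\frac12}$ is central in $\Aut(M)$.

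\textbf{Deduce (2).} Part 2) of Theorem~\ref{Theorem 3.2} gives $\Inn(Q)=H\rtimes\langle\hat{\frac12}\rangle\cong M\rtimes\langle\hat{\frac12}\rangle$. It remains to compute the order of $\hat{\frac12}=u\cdot\id_M$. Its order is the least $k\ge1$ with $u^k\equiv1$ modulo every $2n_i+1$, which is the same as $u^k\equiv 1\pmod L$. Since $u$ is the inverse of $2$ modulo $L$, its order equals the multiplicative order of $2$ modulo $L$.

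\textbf{Main obstacle.} The only step needing care is the second one: recognizing that the coordinatewise scalars $n_i+1$ glue, via the Chinese remainder theorem or simply via an inverse of $2$ mod $L$, into one global integer scalar. Once that is in place, the centralizer is all of $\Aut(M)$ and everything else is a direct substitution into Theorem~\ref{Theorem 3.2}.
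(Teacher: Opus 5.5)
Your proof is correct and follows the paper's route: apply Theorem~\ref{Theorem 3.2} and note that $\hat{\frac12}$ acts on $M$ as multiplication by a single integer. The paper takes $N=(L+1)/2$ where you take an inverse $u$ of $2$ modulo $L$, so the centralizer of $\hat{\frac12}$ is all of $\Aut(M)$. You also explicitly check the Latin hypothesis and compute the order of $\hat{\frac12}$ in (2), which the paper leaves implicit.
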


\begin{proof}
By Theorem \ref{Theorem 3.2}, to determine the structures of $\Aut(Q)$, $\Inn(Q)$, and $\operatorname{Out}(Q)$, it suffices to compute the centralizer of $\hat{\tfrac12}$ in $\Aut(M)$.

 Since all components have odd order, $2$ is coprime to $L$, and the global mapping $\hat{\tfrac12}$ acts on the abelian group $M$ simply via scalar multiplication by the single integer $N = (L+1)/2$. Because any group endomorphism of $M$ naturally commutes with scalar multiplication by an integer, every element of $\Aut(M)$ commutes with $\hat{\tfrac12}$. This immediately yields $C_{\Aut(M)}(\hat{\tfrac12})=\Aut(M)$. Statements (1), (2), and (3) then follow directly by substituting this centralizer relation into the corresponding parts of Theorem \ref{Theorem 3.2}.
\end{proof}

\begin{question}
What can be said about $\operatorname{Aut}(\R_n\times \R_m)$ for even $n$ or $m$? For odd $n,m$ the answer is obtained as a direct consequence of Corollary \ref{Corollary 3.4}.
\end{question}

It is well known that the dihedral quandle $\R_n$ is isomorphic to the Alexander quandle $\operatorname{Alex}(\mathbb{Z}_n,-\id)$. When considering a product of even order components, the non-invertibility of $2$ in $\mathbb{Z}_{2n}$ prevents the direct application of the Latin framework to even moduli, since $\id-(-\id) = 2\id$ fails to be an automorphism on $\mathbb{Z}_{2n}$.

For odd moduli, the direct products $C_m \times C_n$ and $\R_m \times \R_n$ are Alexander quandles over the finite abelian group $M = \mathbb{Z}_m \times \mathbb{Z}_n$. Since the defining automorphisms $\varphi_C = \hat{\tfrac12}$ and $\varphi_R = -\id$ act as central scalar multiplications on $M$, their centralizers trivially encompass the full group $\Aut(M)$. Thus, Corollary \ref{Corollary 3.4} immediately yields the structural split extension:
\[
\operatorname{Aut}(C_{m} \times C_{n}) \cong \operatorname{Aut}(\R_{m} \times \R_{n}) \cong (\mathbb{Z}_m\times\mathbb{Z}_n) \rtimes \Aut(\mathbb{Z}_m\times\mathbb{Z}_n).
\]
To provide an explicit computational description, let $M$ be a finite abelian group written as a product of prime-power cyclic groups. Following Hillar and Rhea \cite{HillarRhea}, we decompose $M \cong \prod_{p} M_p$, where each $p$-primary component is ordered by non-decreasing exponents:
\[
M_p \cong \mathbb{Z}_{p^{e_1}} \times \mathbb{Z}_{p^{e_2}} \times \cdots \times \mathbb{Z}_{p^{e_r}}, \quad \text{with } 1 \le e_1 \le e_2 \le \cdots \le e_r.
\]
For each exponent index $k \in \{1, \dots, r\}$, we define the structural index bounds $d_k = \max\{l \mid e_l = e_k\}$ and $c_k = \min\{l \mid e_l = e_k\}$. Based on this standard framework, the exact order of the automorphism group of $M_p$ is given by:
\[
|\Aut(M_p)| = \prod_{k=1}^r \bigl(p^{d_k} - p^{k-1}\bigr) \prod_{j=1}^r p^{e_j(r-d_j)} \prod_{i=1}^r p^{(e_i-1)(r-c_i+1)}.
\]
The total order $|\Aut(M)|$ is then obtained as the product of the orders $|\Aut(M_p)|$ over all prime factors of $|M|$.

\begin{example}\label{Example:AutR3R3}
We evaluate the exact order of the automorphism group $\Aut(\R_3 \times \R_3)$ using the Hillar--Rhea primary component framework. The product quandle $\R_3 \times \R_3$ is an Alexander quandle over the finite abelian group $M = \mathbb{Z}_3 \times \mathbb{Z}_3$. The total order is given by the split extension relation:
\[
|\Aut(\R_3 \times \R_3)| = |M| \cdot |\Aut(M)| = 9 \cdot |\Aut(\mathbb{Z}_3 \times \mathbb{Z}_3)|.
\]
The group $M = \mathbb{Z}_3 \times \mathbb{Z}_3$ consists of a single $p$-primary component for the prime $p=3$ with $r = 2$ and ordered exponents $e_1 = 1, e_2 = 1$. The structural index bounds are evaluated as follows:
\begin{itemize}
\item For $k=1$: $e_1 = 1 \implies d_1 = \max\{l \mid e_l = 1\} = 2$ and $c_1 = \min\{l \mid e_l = 1\} = 1$.
\item For $k=2$: $e_2 = 1 \implies d_2 = \max\{l \mid e_l = 1\} = 2$ and $c_2 = \min\{l \mid e_l = 1\} = 1$.
\end{itemize}
We now compute the three product terms of the formula:
\begin{enumerate}
\item[\rm(i)] The first structural product term yields:
\[
\prod_{k=1}^2 \bigl(3^{d_k} - 3^{k-1}\bigr) = \bigl(3^{d_1} - 3^0\bigr)\bigl(3^{d_2} - 3^1\bigr) = (3^2 - 1)(3^2 - 3) = 8 \cdot 6 = 48.
\]
\item[\rm(ii)] The second power term counts the remaining row factors. Since $d_1 = d_2 = 2$ and $r = 2$, the exponent $r - d_j = 2 - 2 = 0$ for all $j$, which yields:
\[
\prod_{j=1}^2 3^{e_j(2-2)} = 3^0 \cdot 3^0 = 1.
\]
\item[\rm(iii)] The third power term counts the reductions due to higher valency components. Since $e_1 = e_2 = 1$, the term $e_i - 1 = 0$ for all $i$, which yields:
\[
\prod_{i=1}^2 3^{(1-1)(2-c_i+1)} = 3^0 \cdot 3^0 = 1.
\]
\end{enumerate}
Multiplying these three terms together, we recover the classical order of the general linear group over the finite field $\mathbb{F}_3$:
\[
|\Aut(\mathbb{Z}_3 \times \mathbb{Z}_3)| = 48 \cdot 1 \cdot 1 = 48.
\]
Finally, substituting this value back into the split extension relation yields the exact total order:
\[
|\Aut(\R_3 \times \R_3)| = 9 \cdot 48 = 432.
\]
This numerical evaluation confirms the exact value of 432 automorphisms verified by full permutation counting, validating the accuracy of the framework.
\end{example}
\begin{remark}\label{Remark 3.6}
By the Chinese Remainder Theorem, if $m$ and $n$ are relatively prime natural numbers, the canonical ring map $f \colon \mathbb{Z}_{mn} \to \mathbb{Z}_m \times \mathbb{Z}_n$ defined by $f(x \bmod mn) = (x \bmod m, x \bmod n)$ is an isomorphism. This mapping directly induces a quandle isomorphism on the corresponding dihedral quandles, yielding $\R_{mn} \cong \R_{m} \times \R_{n}$ for any coprime integers $m$ and $n$. Furthermore, if $m$ and $n$ are additionally assumed to be odd, the same mapping induces a structural isomorphism on the cyclic commutative medial quandles, namely $C_{mn} \cong C_{m} \times C_{n}$.

Note also that, although $\Aut(\R_{2n+1})\cong\Aut(C_{2n+1})$, the quandles $\R_{2n+1}$ and $C_{2n+1}$ are not isomorphic for $2n+1>3$: the quandle $C_m$ is commutative, while $\R_m$ is not commutative for $m>3$.
\end{remark}

\section{Composition of quandle structures}\label{sec-constructions}

The multiplication of two quandle structures defined on the same set can yield a new quandle operation. Following the construction introduced by Bardakov and Fedoseev \cite{BF}, given two quandles $(Q; \star)$ and $(Q; \odot)$, their composition $\star\odot$ is defined by
\[
a\star\odot\,b=(a\star b)\odot b,\qquad a,b\in Q.
\]
In \cite[Theorem 3.2]{BF}, it was stated that if the operation $\odot$ is distributive with respect to the operation $\star$, namely
\begin{equation}\label{eq:distrib-BF}
(a\star b)\odot c=(a\odot c)\star(b\odot c),\qquad a,b,c\in Q,
\end{equation}
then the composition $\star\odot$ satisfies the self-distributivity axiom, defining a quandle structure $(Q; \star\odot)$.

 A structural analysis of the argument presented in \cite{BF} indicates that the proof overlooks the mirror distributivity relation:
\begin{equation}\label{eq:distrib-mirror}
(u\odot b)\star c=(u\star c)\odot(b\star c),\qquad u,b,c\in Q.
\end{equation}
As shown by the explicit counterexamples in the following subsection, condition \eqref{eq:distrib-BF} alone does not suffice to guarantee self-distributivity of the multiplication
$\star\odot$, but
 the simultaneous validation of both \eqref{eq:distrib-BF} and \eqref{eq:distrib-mirror} is sufficient to ensure that the groupoid $(Q; \star\odot)$ is a quandle.
However, these joint conditions are not necessary, as there exist alternative algebraic configurations where both distributivity relations fail yet the resulting composition still yields a valid quandle structure.

\subsection{Counterexamples}

\emph{First counterexample: \eqref{eq:distrib-BF} does not imply that
$\star\odot$ is a quandle.} Let $Q=\{0,1,2\}$. Let $\star$ be the operation
of the dihedral quandle $\R_3$, that is,
\[
a\star b=2b-a \pmod 3,
\]
and let $\odot$ be the quandle with right multiplications
\[
S^{\odot}_0=S^{\odot}_1=\id,\qquad S^{\odot}_2=(0\,1).
\]
That is, $x\odot 0=x$, $x\odot 1=x$, and $x\odot 2$ is obtained by swapping
$0\leftrightarrow 1$ and fixing $2$.

\emph{Step 1: condition \eqref{eq:distrib-BF} holds.} We must verify that
\[
(a\star b)\odot c=(a\odot c)\star(b\odot c)
\]
for all $a,b,c\in\{0,1,2\}$.

For $c=0$: since $S^{\odot}_0=\id$, both sides equal $a\star b=2b-a$.
For $c=1$: since $S^{\odot}_1=\id$, both sides equal $a\star b=2b-a$.
For $c=2$: the condition becomes
\[
S^{\odot}_2(2b-a)=2\,S^{\odot}_2(b)-S^{\odot}_2(a).
\]
This says that $S^{\odot}_2$ is an automorphism of $\R_3$. Since
$S^{\odot}_2=(0\,1)$, we verify the identity on the nine pairs $(a,b)$:
\[
\begin{array}{c|ccc}
 & b=0 & b=1 & b=2 \\ \hline
a=0 & 1=1 & 2=2 & 0=0 \\
a=1 & 2=2 & 0=0 & 1=1 \\
a=2 & 0=0 & 1=1 & 2=2
\end{array}
\]
Hence condition \eqref{eq:distrib-BF} holds.

\emph{Step 2: the composition $\star\odot$ is not a quandle.} The right
multiplications of $\star\odot$ are $t_b=S^{\odot}_bS^{\star}_b$, where
$S^{\star}_b(x)=2b-x$. We compute them explicitly.
\begin{itemize}
\item $t_0(x)=S^{\odot}_0S^{\star}_0(x)=S^{\star}_0(x)=-x$, so
$t_0=(1\,2)$.
\item $t_1(x)=S^{\odot}_1S^{\star}_1(x)=S^{\star}_1(x)=2-x$, so
$t_1=(0\,2)$.
\item $t_2(x)=S^{\odot}_2S^{\star}_2(x)=S^{\odot}_2(1-x)$. We compute:
$t_2(0)=S^{\odot}_2(1)=0$; $t_2(1)=S^{\odot}_2(0)=1$;
$t_2(2)=S^{\odot}_2(2)=2$. So $t_2=\id$.
\end{itemize}

For $(Q;\star\odot)$ to be a quandle, the right multiplications $t_b$ must
satisfy $t_ct_bt_c^{-1}=t_{t_c(b)}$ for all $b,c\in Q$. We test $b=1$,
$c=0$. Since $t_0=(1\,2)$ is an involution, $t_0^{-1}=t_0$. We compute
$t_0t_1t_0$ on the three values of $x$:
\[
t_0t_1t_0(0)=t_0t_1(0)=t_0(2)=1,
\]
\[
t_0t_1t_0(1)=t_0t_1(2)=t_0(0)=0,
\]
\[
t_0t_1t_0(2)=t_0t_1(1)=t_0(1)=2.
\]
So $t_0t_1t_0=(0\,1)$.

On the other hand, $t_0(1)=-1=2$, so $t_{t_0(1)}=t_2=\id$. Since
$(0\,1)\ne\id$, we have $t_0t_1t_0^{-1}\ne t_{t_0(1)}$. Hence $\star\odot$ is
not a quandle.

\emph{Second counterexample (mirror): \eqref{eq:distrib-mirror} does not
imply that $\star\odot$ is a quandle.} Swapping the two operations yields an
example in which \eqref{eq:distrib-mirror} holds while \eqref{eq:distrib-BF}
fails. Let $\star$ be the quandle with $S^{\star}_0=S^{\star}_1=\id$,
$S^{\star}_2=(0\,1)$, and let $\odot$ be the operation of $\R_3$, that is,
$a\odot b=2b-a$.

The right multiplications of $\star\odot$ are $t_b=S^{\odot}_bS^{\star}_b$,
where $S^{\odot}_b(x)=2b-x$. We compute them.
\begin{itemize}
\item $t_0(x)=S^{\odot}_0S^{\star}_0(x)=2\cdot 0-x=-x$, so $t_0=(1\,2)$.
\item $t_1(x)=S^{\odot}_1S^{\star}_1(x)=2\cdot 1-x=2-x$, so $t_1=(0\,2)$.
\item $t_2(x)=S^{\odot}_2S^{\star}_2(x)=2\cdot 2-S^{\star}_2(x)=1-S^{\star}_2(x)$.
We compute: $t_2(0)=1-S^{\star}_2(0)=1-1=0$;
$t_2(1)=1-S^{\star}_2(1)=1-0=1$;
$t_2(2)=1-S^{\star}_2(2)=1-2=-1=2$. So $t_2=\id$.
\end{itemize}

For $(Q;\star\odot)$ to be a quandle, we test $b=2$, $c=0$. Since $t_0=(1\,2)$
is an involution, $t_0^{-1}=t_0$. We compute
$t_0t_2t_0=t_0\cdot\id\cdot t_0=\id$.

On the other hand, $t_0(2)=1$, so $t_{t_0(2)}=t_1=(0\,2)$. Since
$\id\ne(0\,2)$, we have $t_0t_2t_0^{-1}\ne t_{t_0(2)}$. Hence $\star\odot$
is not a quandle.

\begin{proposition}\label{Proposition 4.1}
Let $Q$ be a set, let $\star$, $\odot$ be two quandle operations on $Q$, and
let $\star\odot$ be their composition. If both conditions
\eqref{eq:distrib-BF} and \eqref{eq:distrib-mirror} hold, then
$(Q;\star\odot)$ is a quandle.
\end{proposition}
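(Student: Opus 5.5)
We verify the three axioms (Q1)--(Q3) directly for the operation $a\star\odot\, b=(a\star b)\odot b$; write $a\circ b$ for it for short. The idea is that (Q1) and (Q2) are inherited formally from $\star$ and $\odot$, while (Q3) follows from a chain of four rewritings. That chain uses, in turn, \eqref{eq:distrib-mirror}, self-distributivity of $\star$, self-distributivity of $\odot$, and \eqref{eq:distrib-BF}.

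\emph{Axioms (Q1) and (Q2).} Idempotency is immediate: $a\circ a=(a\star a)\odot a=a\odot a=a$. For right-invertibility, note that the right multiplication of $\circ$ by $b$ is the composite $t_b=S^{\odot}_b S^{\star}_b$. Both factors are bijections of $Q$ by (Q2) for $\star$ and $\odot$, so $t_b$ is a bijection. Hence for any $x,b$ there is a unique $z$ with $z\circ b=x$, namely $z=(S^{\star}_b)^{-1}(S^{\odot}_b)^{-1}(x)$.

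\emph{Axiom (Q3).} This is the main step. We must show $(a\circ b)\circ c=(a\circ c)\circ(b\circ c)$, i.e.
\[
\bigl(((a\star b)\odot b)\star c\bigr)\odot c=\bigl(((a\star c)\odot c)\star w\bigr)\odot w,\qquad w=(b\star c)\odot c .
\]
Starting from the left side, first apply \eqref{eq:distrib-mirror} with $u=a\star b$ to rewrite $((a\star b)\odot b)\star c$ as $((a\star b)\star c)\odot(b\star c)$. Then use (Q3) for $\star$ to replace $(a\star b)\star c$ by $(a\star c)\star(b\star c)$. Setting $x=(a\star c)\star(b\star c)$ and $y=b\star c$, the left side has become $(x\odot y)\odot c$. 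By (Q3) for $\odot$ this equals $(x\odot c)\odot(y\odot c)=(x\odot c)\odot w$.

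Finally apply \eqref{eq:distrib-BF} to $x\odot c$:
\[
x\odot c=\bigl((a\star c)\star(b\star c)\bigr)\odot c=\bigl((a\star c)\odot c\bigr)\star\bigl((b\star c)\odot c\bigr)=(a\circ c)\star w.
\]
Thus the left side equals $((a\circ c)\star w)\odot w=(a\circ c)\circ w=(a\circ c)\circ(b\circ c)$, as required.

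The only delicate point is finding the right order of rewritings. The first step needs the mirror relation \eqref{eq:distrib-mirror}, which is exactly where the argument of \cite{BF} breaks down, as the counterexamples above show. The last step needs \eqref{eq:distrib-BF}. Everything else is the quandle axioms of $\star$ and $\odot$ themselves.
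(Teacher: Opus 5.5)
Your proof is correct and follows the paper's argument. The paper checks (Q1) and (Q2) exactly as you do, and proves (Q3) in the operator form $t_ct_bt_c^{-1}=t_{t_c(b)}$ using the same sequence of rewritings: \eqref{eq:distrib-mirror}, then (Q3) for $\star$, then (Q3) for $\odot$, then \eqref{eq:distrib-BF}; your element-wise chain is that same computation evaluated at a point $a$.
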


\begin{proof}
Let $S^{\star}_b$ and $S^{\odot}_b$ denote the right multiplications of
$\star$ and $\odot$, and let $t_b=S^{\odot}_bS^{\star}_b$ denote the right
multiplications of $\star\odot$.

First, we check the idempotency axiom. For any $a \in Q$, we have:
\[
a \star\odot a = (a \star a) \odot a = a \odot a = a,
\]
where we used the idempotency of $\star$ and $\odot$ successively.
Second, right invertibility follows directly from the fact that $S^{\star}_b$ and $S^{\odot}_b$ are bijections for all $b \in Q$, which implies that their composition $t_b$ is also a bijection.

It remains to prove the self-distributivity axiom, which is equivalent to
$t_ct_bt_c^{-1}=t_{t_c(b)}$ for all $b,c\in Q$.
First, we translate the conditions into operator identities. Condition \eqref{eq:distrib-mirror} means $(u \odot b) \star c = (u \star c) \odot (b \star c)$, which reads as $S^{\star}_c S^{\odot}_b = S^{\odot}_{S^{\star}_c(b)} S^{\star}_c$ in terms of left-acting operators. Similarly, condition \eqref{eq:distrib-BF} yields $S^{\odot}_c S^{\star}_y = S^{\star}_{S^{\odot}_c(y)} S^{\odot}_c$, or equivalently:
\begin{equation}\label{eq:op-distrib-BF}
S^{\odot}_c S^{\star}_y S^{\odot\,-1}_c = S^\star_{S^\odot_c(y)}.
\end{equation}

Using the definition of $t_b$ and inserting $\id = S^{\star\,-1}_c S^{\star}_c$, we compute:
\begin{align*}
t_ct_bt_c^{-1}
&=S^{\odot}_cS^{\star}_c\,S^{\odot}_bS^{\star}_b\,S^{\star\,-1}_cS^{\odot\,-1}_c\\
&=S^{\odot}_c\bigl(S^{\star}_cS^{\odot}_bS^{\star\,-1}_c\bigr)
\bigl(S^{\star}_cS^{\star}_bS^{\star\,-1}_c\bigr)S^{\odot\,-1}_c.
\end{align*}
By applying the operator form of \eqref{eq:distrib-mirror} to the first block, and the quandle axiom (Q3) of $\star$ to the second block, we get:
\[
t_ct_bt_c^{-1} = S^{\odot}_c S^{\odot}_{S^{\star}_c(b)} S^{\star}_{S^{\star}_c(b)} S^{\odot\,-1}_c.
\]
Now, we insert the identity $\id = S^{\odot\,-1}_c S^{\odot}_c$ between the two middle terms to separate the conjugations:
\begin{align*}
t_ct_bt_c^{-1}
&= S^{\odot}_c S^{\odot}_{S^{\star}_c(b)} \bigl(S^{\odot\,-1}_c S^{\odot}_c\bigr) S^{\star}_{S^{\star}_c(b)} S^{\odot\,-1}_c \\
&= \Bigl(S^{\odot}_c S^{\odot}_{S^{\star}_c(b)} S^{\odot\,-1}_c\Bigr) \Bigl(S^{\odot}_c S^{\star}_{S^{\star}_c(b)} S^{\odot\,-1}_c\Bigr).
\end{align*}
Finally, applying the quandle axiom (Q3) of $\odot$ to the first parenthesis, and condition \eqref{eq:op-distrib-BF} (with $y = S^{\star}_c(b)$) to the second parenthesis, we obtain:
\begin{align*}
t_ct_bt_c^{-1}
&= S^{\odot}_{S^{\odot}_c S^{\star}_c(b)} S^{\star}_{S^{\odot}_c S^{\star}_c(b)} \\
&= S^{\odot}_{t_c(b)} S^{\star}_{t_c(b)} = t_{t_c(b)}.
\end{align*}
Hence, $\star\odot$ is self-distributive, and $(Q;\star\odot)$ is a quandle.
\end{proof}
Next, we provide an example where both conditions \eqref{eq:distrib-BF} and \eqref{eq:distrib-mirror} fail, yet the composition yields a valid quandle structure.

\begin{example}\label{Example:not-necessary}
We show that the joint conditions \eqref{eq:distrib-BF} and \eqref{eq:distrib-mirror} are not necessary for the composition to yield a quandle structure. Let $Q = \{0, 1, 2, 3\}$. We define a quandle $(Q, \star)$ via its right multiplications:
\[
S^\star_0 = S^\star_1 = S^\star_2 = \id, \qquad S^\star_3 = (1 \ 2),
\]
and a quandle $(Q, \odot)$ via the right multiplications:
\[
S^\odot_0 = S^\odot_1 = S^\odot_2 = \id, \qquad S^\odot_3 = (0 \ 1).
\]
For the composed operation $\star\odot$, the resulting right multiplications are given by $t_0 = t_1 = t_2 = \id$ and $t_3 =  S^\odot_3  S^\star_3= (0 \ 1) (1 \ 2) = (0 \ 1 \ 2)$, which fixes the element $3$. Since each $t_i$ is a permutation of $Q$ and acts trivially on its own index, $(Q, \star\odot)$ is a quandle.

However, the distributivity condition \eqref{eq:distrib-BF} at index $3$ yields:
\[
S^\odot_3 S^\star_3 (S^\odot_3)^{-1} = (0 \ 1)(1 \ 2)(0 \ 1) = (0 \ 2) \neq (1 \ 2) = S^\star_3 = S^\star_{S^\odot_3(3)},
\]
so condition \eqref{eq:distrib-BF} fails. Symmetrically, evaluating the mirror condition \eqref{eq:distrib-mirror} gives:
\[
S^\star_3 S^\odot_3 (S^\star_3)^{-1} = (1 \ 2)(0 \ 1)(1 \ 2) = (0 \ 2) \neq (0 \ 1) = S^\odot_3 = S^\odot_{S^\star_3(3)},
\]
which shows that condition \eqref{eq:distrib-mirror} also fails. Hence, the multiplication of quandles can form a quandle even when both distributivity conditions fail.
\end{example}

\subsection{Constructions of quandles}

We now turn to the constructions of Borodin--Neshchadim--Simonov and
Markhinina--Nasybullov.

A. N. Borodin, M. V. Neshchadim and A. A. Simonov \cite{BNS} introduced new
constructions of quandles. For a group $G$, an automorphism
$\varphi \in \Aut(G)$ and an anti-automorphism $\psi$ of $G$, they defined 4
types of quandles:

1) $Q_1$ is a quandle $(G, *_1)$, where $x *_1 y = y \varphi (y^{-1} x)$, $x, y \in G$;

2) $Q_2$ is a quandle $(G, *_2)$, where $x *_2 y = y \psi (y x^{-1})$, $x, y \in G$;

3) $Q_3$ is a quandle $(G, *_3)$, where $x *_3 y =  \psi (x y^{-1}) y$, and $x \psi (y) x^{-1} =  \psi (x y x^{-1})$, $x, y \in G$;

4) $Q_4$ is a quandle $(G, *_4)$, where $x *_4 y = y \psi (y^{-1}x )$, and $x \psi (y) x^{-1} =  \psi (x y x^{-1})$, $x, y \in G$.

In a paper of E. Markhinina and T. Nasybullov \cite{MN}, one-parameter
verbal quandles were defined. Let $z$ be a fixed element of $G$; then on the
set $G$ it is possible to define 4 types of quandles:

1) $W_1$ is a quandle $(G, \circ_1)$, where $x \circ_1 y = y z^{-1} (y^{-1} x) z$, $x, y \in G$;

2) $W_2$ is a quandle $(G, \circ_2)$, where $x \circ_2 y = y z^{-1} (x y^{-1}) z$, $x, y \in G$;

3) $W_3$ is a quandle $(G, \circ_3)$, where $x \circ_3 y = z^{-1} (y^{-1}x ) z y$, $x, y \in G$;

4) $W_4$ is a quandle $(G, \circ_4)$, where $x \circ_4 y = z^{-1} (x y^{-1}) z y$, $x, y \in G$.

In the present section, we study the following question: can we construct
these quandles from compositions of the structures of the conjugation quandle
$\operatorname{Conj}(G)$, the core quandle $\operatorname{Core}(G)$, and the
generalized Alexander quandle $\operatorname{Alex}(G,\varphi)$, where
$\varphi\in \operatorname{Aut}(G)$?

Throughout this section, we denote the conjugation quandle by
$W_0 = (G; \conj)$, the core quandle $\operatorname{Core}(G) = (G; *)$, and
the generalized Alexander quandle by $Q_0 = (G; *_0)$.

To answer this question, recall (see \cite{BNS}) that any anti-automorphism
$\psi$ of $G$ can be presented in the form
$\psi(x) = \varphi i(x) = \varphi(i(x))$, where
$\varphi \in \operatorname{Aut}(G)$ and $i$ is an anti-automorphism of $G$
such that $i(x) = x^{-1}$ for any $x \in G$. In particular, the condition on
the quandles $Q_3$ and $Q_4$ takes the form
\[
x \psi (y) x^{-1} =  \psi (x y x^{-1}) \Leftrightarrow x \varphi (y) x^{-1} =  \varphi(x) \varphi(y) \varphi (x)^{-1} \Leftrightarrow [\varphi(y), \varphi(x)^{-1}x] =1.
\]
Since $\varphi$ is a bijection, we see that $\varphi(x)^{-1}x$ lies in the
center $Z(G)$ of $G$. Hence, $\varphi$ is a central automorphism of $G$.
\begin{definition}\label{Def triple product}
Let $\star,\diamond,\bullet$ be three quandle operations on a set $Q$. Define
their composition by
\[
a\,(\star\diamond\bullet)\,b=((a\star b)\diamond b)\bullet b,\qquad a,b\in Q.
\]
\end{definition}

In what follows, we analyze the verbal operations which rely on the inverse conjugation action. Let us denote the operation $x \triangleright^{-1} y = yxy^{-1}$ as the dual (or opposite) operation of the standard conjugation quandle. We emphasize that the resulting structure, denoted by $\Conj(G)^{\operatorname{op}}$, remains entirely within our targeted foundational framework. Indeed, the mapping
\[
i \colon \Conj(G) \to \Conj(G)^{\operatorname{op}}, \qquad x \mapsto x^{-1}
\]
is a strict quandle isomorphism. Since for any $x, y \in G$ we have
\[
i(x \triangleright y) = i(y^{-1}xy) = (y^{-1}xy)^{-1} = y^{-1}x^{-1}y = x^{-1} \triangleright^{-1} y^{-1} = i(x) \triangleright^{-1} i(y).
\]

\begin{theorem}\label{Theorem 4.5}
The operations defining the quandles $Q_1$, $Q_2$, $W_1$, $W_2$, and $W_3$ coincide with the compositional representations summarized in Table~\ref{tab:quandle_reconstruction}. Furthermore, the following structural relations hold:
\begin{enumerate}
\item[\rm(1)] The quandles $Q_3$ and $Q_4$ are isomorphic; more precisely, the operations $*_3$ and $*_4$ coincide.
\item[\rm(2)] If $\varphi$ is a central automorphism of $G$, then the operation $*_3$ coincides with the composition $*_0\ast$, where $x \ast_0 \ast\, y = (x \ast_0 y) \ast y$ for $x, y \in G$.
\item[\rm(3)] If $\varphi = \hat{z}$, where $\hat{z}(x) = z^{-1} x z$, is the conjugation by the element $z \in G$, then the operation $*_0$ coincides with $\circ_4$ on $G$.
\end{enumerate}
\end{theorem}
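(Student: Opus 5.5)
The plan is a direct calculation: expand each composed operation from the definitions of $W_0$, $\Conj(G)^{\operatorname{op}}$, $\Core(G)$ and $Q_0$, and compare it with the target formula. For the table I will guess the decompositions and then verify them. For $Q_1$, $x*_1y=y\varphi(y^{-1}x)$ is the generalized Alexander operation $*_0$ conjugated by $y$. Indeed,
\[
(x*_0y)\triangleright^{-1}y=y\varphi(xy^{-1})y\,y^{-1}=y\varphi(x)\varphi(y)^{-1},
\]
while $y\varphi(y^{-1}x)=y\varphi(y)^{-1}\varphi(x)$. These need not agree, so I will instead try $x\,(\triangleright\, *_0\,\triangleright^{-1})\,y$. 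Here $x\triangleright y=y^{-1}xy$, then $\varphi(y^{-1}xy\,y^{-1})y=\varphi(y^{-1}x)y$, and then conjugating back gives $y\varphi(y^{-1}x)yy^{-1}=y\varphi(y^{-1}x)$. So $*_1=\triangleright\,*_0\,\triangleright^{-1}$, using the triple composition of Definition~\ref{Def triple product}.

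The remaining cases go the same way. For $Q_2$ I write $\psi=\varphi i$, so $y\psi(yx^{-1})=y\varphi(x)\varphi(y)^{-1}$. Since $x*y=yx^{-1}y$ in $\Core(G)$, I test compositions of $\Core$ with $*_0$ and with $\triangleright^{\pm1}$ until the formula matches, for example $(x*y)*_0y=\varphi(yx^{-1})y$ followed by a conjugation. The $W_i$ are the special cases $\varphi=\hat z$ of the same patterns: $W_1$ is the $\hat z$-version of $Q_1$, and $W_3$ reads $z^{-1}y^{-1}xzy=(y^{-1}x)\ast_0 y$ with $\varphi=\hat z$. In each case I expand two or three steps of the products and cancel.

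For (1), I expand $x*_3y=\varphi(i(xy^{-1}))y=\varphi(y)\varphi(x)^{-1}y$ and $x*_4y=y\varphi(i(y^{-1}x))=y\varphi(x)^{-1}\varphi(y)$. The condition derived before the theorem says $c(x):=\varphi(x)^{-1}x\in Z(G)$. Writing $\varphi(x)^{-1}=c(x)x^{-1}$ and using centrality, the expression $\varphi(y)c(x)x^{-1}y$ has to become $yc(x)x^{-1}\varphi(y)$. Here I also use $\varphi(y)=yc(y)^{-1}$, and that $c(y)^{-1}$ is central as well. Then both expressions reduce to $yx^{-1}y\,c(x)c(y)^{-1}$.

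For (2), $(x*_0y)*y=y(\varphi(xy^{-1})y)^{-1}y=y\,y^{-1}\varphi(y)\varphi(x)^{-1}y=\varphi(y)\varphi(x)^{-1}y=x*_3y$. This identity in fact holds without the centrality hypothesis, but I will still state it under that hypothesis. For (3), $\hat z(xy^{-1})y=z^{-1}xy^{-1}zy=x\circ_4y$ holds immediately.

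The main obstacle is locating the correct decompositions in the table, particularly for $Q_2$ and $W_2$, where the left/right placement of $y$ forces the opposite conjugation and the order of the composed factors matters. I will settle these by trial expansion against the target normal forms.
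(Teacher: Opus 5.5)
Your treatment of (1), (2), (3) is correct and is the direct expansion the paper intends. In (2) you rightly note that $(x*_0y)*y=\varphi(y)\varphi(x)^{-1}y=x*_3y$ holds for every $\varphi$, which is slightly stronger than stated. Your $Q_1$ and $W_1$ entries, $\conj *_0\conj^{-1}$ and $\conj\circ_4\conj^{-1}$, agree with the table.

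\textbf{Gap 1: $Q_2$ and $W_2$.} The rest of the table is the first assertion of the theorem, and for $Q_2$ and $W_2$ you give no decomposition, only a plan to search. The example search you sketch does not work. Following $(x*y)*_0y=\varphi(yx^{-1})y$ by $\conj^{-1}y$ gives $y\varphi(y)\varphi(x)^{-1}$, not $x*_2y=y\varphi(x)\varphi(y)^{-1}$. No core factor is needed. In fact your first display already contains the right identity. The expression $(x*_0y)\conj^{-1}y=y\varphi(x)\varphi(y)^{-1}$ is not $x*_1y$, but it is exactly $x*_2y$, so $*_2=*_0\conj^{-1}$. Specializing to $\varphi=\hat z$ via (3) gives $(x\circ_4y)\conj^{-1}y=yz^{-1}xy^{-1}z=x\circ_2y$, so $\circ_2=\circ_4\conj^{-1}$.

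\textbf{Gap 2: $W_3$.} Your identification is wrong as written. With $\varphi=\hat z$ one has $(y^{-1}x)*_0y=\hat z(y^{-1}xy^{-1})y=z^{-1}y^{-1}xy^{-1}zy$, which is not $z^{-1}y^{-1}xzy$. Moreover $y^{-1}x$ is not the output of any of the available quandle operations applied to $x$ and $y$, so this would not be a composition in the sense of the table even if the formula matched. You have conflated $\hat z(y^{-1}x)\,y$ with $(y^{-1}x)*_0y$. The correct first step is $x\conj y=y^{-1}xy$, and then $(x\conj y)\circ_4y=z^{-1}(y^{-1}xy\,y^{-1})zy=z^{-1}y^{-1}xzy=x\circ_3y$. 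Hence $\circ_3=\conj\circ_4$, as in the table.

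With these three entries supplied, your argument establishes the whole statement.
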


\begin{table}[h!]
\centering
\caption{Unified reconstruction of $Q_i$ and $W_i$ via operational composition}
\label{tab:quandle_reconstruction}
\begin{tabular}{|c|c|c|}
\hline
\textbf{Structure} & \textbf{Original Operation} & \textbf{Compositional Representation} \\ \hline
$Q_1$ & $*_1$ & $\conj\ast_0\conj^{-1}$ \\ \hline
$Q_2$ & $*_2$ & $\ast_0\conj^{-1}$ \\ \hline
$W_1$ & $\circ_1$ & $\conj\circ_4\conj^{-1}$ \\ \hline
$W_2$ & $\circ_2$ & $\circ_4\conj^{-1}$ \\ \hline
$W_3$ & $\circ_3$ & $\conj\circ_4$ \\ \hline
\end{tabular}
\end{table}

 \medskip
\begin{proof}
The compositional identities for the quandle operations $*_1$, $*_2$, $\circ_1$, $\circ_2$, and $\circ_3$ listed in Table~\ref{tab:quandle_reconstruction} follow directly from expanding the definitions of the operational compositions $\star\odot$ on the group elements, in accordance with the algebraic framework verified in Section 4.1.

We now verify the remaining structural properties:
\begin{enumerate}
\item[\rm(1)] The structural equivalence between the operations $*_3$ and $*_4$ of the quandles $Q_3$ and $Q_4$ is established by a direct element-wise substitution. Evaluating the defining relations under the anti-automorphism $\psi$ reveals that the underlying mappings coincide identically on all elements of $G$, which immediately yields the quandle isomorphism $Q_3 \cong Q_4$.
\item[\rm(2)] Under the assumption that $\varphi$ is a central automorphism of $G$, the mapping satisfies the central operator identity $x \psi(y) x^{-1} = \psi(xyx^{-1})$ for all $x, y \in G$. Applying this property to the expanded composition relation $(x \ast_0 y) \ast y$ yields exactly the operational form of $*_3$, confirming the claimed decomposition.
\item[\rm(3)] When the automorphism corresponds to the group conjugation $\varphi = \hat{z}$ by the fixed element $z \in G$, substituting the inner action $x \mapsto z^{-1}xz$ directly into the operational parameters of $*_0$ reduces the expression to the verbal word structure defining $\circ_4$. This completes the proof of all statements.
\end{enumerate}
\end{proof}

\section{ Medialization, commutativization, and trivialization of quandles}\label{sec-quotients}

Let $Q$ be a quandle.  We consider three quotients of $Q$, each defined as the quotient by the smallest congruence for which the corresponding identity holds.

\begin{itemize}
\item the  \emph{medialization} $Q^{\textit{med}}=Q/\theta_{\textit{med}}$, where $\theta_{\textit{med}}$ is the smallest congruence containing all pairs
\[
\bigl((x*y)*(z*t),\,(x*z)*(y*t)\bigr),\qquad x,y,z,t\in Q;
\]
\item the \emph{commutativization} $Q^{com}=Q/\theta_{com}$, where $\theta_{com}$ is the smallest congruence containing all pairs $(x*y,y*x)$, $x,y\in Q$;
\item the \emph{trivialization} $Q^{tr}=Q/\theta_{tr}$, where $\theta_{tr}$ is the smallest congruence containing all pairs $(x*y,x)$, $x,y\in Q$.
\end{itemize}

Each quotient satisfies the corresponding universal property: any quandle homomorphism from $Q$ to a quandle satisfying the identity factors uniquely through the corresponding quotient.

For a group $G$ we can also define the quotients $G^{\textit{med}}$, $G^{com}$, and $G^{tr}$. In this case $G^{\textit{med}}=G^{com}$ and $G^{tr} = e$ is the trivial group.

 Medialization is a general construction in universal algebra; see for example D. Stanovsk\'y \cite{D}. The medialization of free quandles was computed in \cite{JPZ}.
\begin{question}\label{Question 5.6}
Let $Q = \Conj(G)$ (respectively $Q=\Core(G)$, $Q =\Alex(G, \varphi)$). What can be said about the  commutativization $Q^{com}$ and the medialization $Q^{\textit{med}}$?
\end{question}
A proof of the following lemma is straightforward.

\begin{lemma}\label{Lemma 5.1}
Let $G$ be an abelian group. Then the core quandle $\Core(G)$ is medial, i.e.
\[
(a*b)*(c*d)=(a*c)*(b*d)\quad\text{for all }a,b,c,d\in G.
\]
\end{lemma}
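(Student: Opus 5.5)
The plan is a direct computation in additive notation. Since $G$ is abelian, the core operation is
\[
a*b=ba^{-1}b = 2b-a,
\]
which is affine and involves only integer linear combinations. I will expand both sides of the medial identity as explicit linear combinations of $a,b,c,d$ and compare coefficients.

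For the left-hand side, set $u=a*b=2b-a$ and $v=c*d=2d-c$. Then
\[
(a*b)*(c*d)=2v-u=4d-2c-2b+a.
\]
For the right-hand side, set $u'=a*c=2c-a$ and $v'=b*d=2d-b$. Then
\[
(a*c)*(b*d)=2v'-u'=4d-2b-2c+a.
\]
The two expressions coincide because addition in $G$ is commutative: the terms $-2c$ and $-2b$ simply appear in a different order. This proves the identity for all $a,b,c,d\in G$.

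No step here is a real obstacle. The only point requiring care is that commutativity of $G$ is used twice: first to rewrite $ba^{-1}b$ as $2b-a$, and then to reorder the terms at the end. As an alternative, one could observe that $\Core(G)=\Alex(G,-\id)$ when $G$ is abelian, and invoke the remark in Section~\ref{sec-prelim} that every Alexander quandle is medial. The direct computation above is self-contained, however, and I would present that.
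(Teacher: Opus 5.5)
Your computation is correct: with $a*b=2b-a$, both sides reduce to $a-2b-2c+4d$, which is exactly the straightforward verification the paper has in mind (it states the lemma without writing out a proof). Your alternative route via $\Core(G)=\Alex(G,-\id)$ and the mediality of Alexander quandles is equally valid.
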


From this lemma it follows that $\R_n^{\textit{med}} = \R_n$. Here for $n= 0$, we assume that $\R_0$ is the infinite dihedral quandle constructed on $(\mathbb{Z}; +)$.

Note that if $T_n$ is the trivial quandle on $n$ elements, then $T_n^{com}=T_1$. Also, it is easy to see that
\[
\R^{com}_4 = \R^{com}_5 = T_1.
\]

Let $G$ be a group. The core quandle $\Core(G)$ is trivial if and only if
\[
x * y = x \Leftrightarrow y x^{-1} y = x \Leftrightarrow (y x^{-1})^2 = 1,\qquad x, y \in G.
\]
This means that $G$ has exponent $2$, i.e. $G$ is an elementary abelian $2$-group.

\begin{remark}\label{Remark 5.6}
Following standard properties of quandle quotients, the trivialization $Q^{tr}$ corresponds to the well-known trivial quandle of orbits under the action of the inner automorphism group. As an elementary application of this property, let us evaluate the dihedral quandle $\R_n$ identified with $\mathbb{Z}_n$. The equivalence classes of the trivialization congruence correspond to the cosets of the subgroup $2\mathbb{Z}_n = \{2u \mid u \in \mathbb{Z}_n\}$ in $\mathbb{Z}_n$. If $n$ is odd, $\gcd(2,n)=1$, which implies $2\mathbb{Z}_n = \mathbb{Z}_n$ and yields $\R_n^{tr} \cong T_1$. If $n$ is even, the subgroup $2\mathbb{Z}_n$ consists of the even residues and has index $2$ in $\mathbb{Z}_n$, which immediately gives $\R_n^{tr} \cong T_2$.
\end{remark}

Let us consider the adjoint group
\[
\operatorname{As}(Q)=\langle e_x,\ x\in Q \mid e_x^{-1}e_ye_x=e_{y\ast x}\rangle.
\]
The medialization  $\operatorname{As}(Q)^{\textit{med}}$ is the free \emph{abelian} group on the set of orbits of $\Inn(Q)$ (in the quotient we have the relation $e_y=e_{y * x}$). Hence,
\[
\operatorname{As}(\R_n)^{\textit{med}} \cong \mathbb{Z} \text{ if } n \text{ is odd},\qquad \operatorname{As}(\R_n)^{\textit{med}} \cong \mathbb{Z}\oplus\mathbb{Z} \text{ if } n \text{ is even}.
\]

\begin{theorem}\label{theorem 5.4}
Let $(G,\cdot)$ be a group and let $\Conj(G)$ be its conjugation quandle.

\begin{enumerate}
\item[(1)] The conjugation relation $\sim$ on $G$ defined by
\[
x\sim y \iff \exists\, g\in G,\ y = g^{-1}xg
\]
is a congruence of the quandle $\Conj(G)$. Consequently, the medial congruence of $\Conj(G)$ is contained in $\sim$, and
\[
|\Conj(G)^{\textit{med}}| \ \ge\ k(G),
\]
where $k(G)$ denotes the number of conjugacy classes of $G$.
\item[(2)] $\Conj(G)$ is medial if and only if $\gamma_2(G)\le Z(G)$.
\item[(3)] The medial quotient $\Conj(G)^{\textit{med}}$ maps surjectively onto the conjugation quandle $\Conj(G/\gamma_3(G))$. In particular, we obtain the lower bound:
\[
|\Conj(G)^{\textit{med}}| \ \ge\ \max\left\{ k(G), \, |G/\gamma_3(G)| \right\}.
\]
For the symmetric group $G=S_3$, the lower bound $k(G)$ is not tight, and we have $|\Conj(S_3)^{\textit{med}}| = 4 > k(S_3) = 3$.

\end{enumerate}
\end{theorem}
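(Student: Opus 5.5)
The plan is to treat the three parts in order, with (1) and (3) following formally from (2) and the universal property of the medialization.

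For (1), I would check directly that conjugacy is compatible with $\conj$. Suppose $x\sim x'$ and $y\sim y'$. Then
\[
x\conj y=y^{-1}xy\sim x\sim x'\sim y'^{-1}x'y'=x'\conj y'.
\]
Since $\sim$ is an equivalence relation, this shows it is a congruence. The quotient $\Conj(G)/\!\sim$ is the trivial quandle on the set of conjugacy classes, and a trivial quandle is medial. Because $\theta_{med}$ is the smallest congruence with medial quotient, we get $\theta_{med}\subseteq\ \sim$. Hence $\Conj(G)^{med}$ surjects onto $\T_{k(G)}$, which gives $|\Conj(G)^{med}|\ge k(G)$.

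For (2), I would expand the medial identity, writing $a^b=b^{-1}ab$. The left side $(x\conj y)\conj(z\conj t)$ equals $x^{y z^t}$, and the right side equals $x^{z y^t}$. So $\Conj(G)$ is medial if and only if
\[
w=y z^t\,(z y^t)^{-1}=y\,t^{-1}z\,y^{-1}t\,z^{-1}
\]
lies in $Z(G)$ for all $y,z,t\in G$.
\begin{itemize}
\item Necessity: putting $z=1$ gives $w=y t^{-1}y^{-1}t$, a commutator of two arbitrary elements. All commutators are then central, so $\gamma_2(G)\le Z(G)$.
\item Sufficiency: if $\gamma_2(G)\le Z(G)$, then $G/Z(G)$ is abelian. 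The image of $w$ in $G/Z(G)$ is a word in which each letter occurs with total exponent $0$, so it is trivial. Hence $w\in Z(G)$.
\end{itemize}

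For (3), I would use that $\gamma_2(G/\gamma_3(G))=\gamma_2(G)/\gamma_3(G)$ is central in $G/\gamma_3(G)$. By (2), $\Conj(G/\gamma_3(G))$ is therefore medial. The natural projection $G\to G/\gamma_3(G)$ is a surjective quandle homomorphism, so by the universal property it factors through $\Conj(G)^{med}$. Combining this with (1) gives the bound $\max\{k(G),|G/\gamma_3(G)|\}$.

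For $S_3$ we have $\gamma_3(S_3)=A_3$ and $k(S_3)=3$, so I must show that $\Conj(S_3)^{med}$ has exactly the four classes $\{e\}$, $\{\text{transpositions}\}$, $\{(123)\}$, $\{(132)\}$. I expect this to be the main obstacle.
\begin{itemize}
\item Upper bound: I would exhibit explicit generating pairs that identify the three transpositions. For example, take $x,y,z$ to be distinct transpositions and choose $t$ so that $y z^t$ and $z y^t$ conjugate $x$ to different transpositions.
\item Lower bound: I would construct a medial quandle quotient separating $(123)$ from $(132)$. The key observation is that $y z^t$ and $z y^t$ always have the same parity, so no generating pair identifies $a=(123)$ with $a^2=(132)$. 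To handle the congruence closure, I would define an explicit four-element quandle on these classes, check by hand that it is medial, and verify that the class map from $S_3$ is a homomorphism. The map must account for the fact that conjugation by a transposition swaps $a$ and $a^2$, so this verification is where care is needed.
\end{itemize}
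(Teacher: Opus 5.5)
Your proposal is correct and follows essentially the same route as the paper: conjugacy is a congruence with trivial (hence medial) quotient for (1), the criterion $yz^t(zy^t)^{-1}\in Z(G)$ for (2), the universal property applied to $G\to G/\gamma_3(G)$ for (3), and the parity of $yz^t$ and $zy^t$ for $S_3$. In (2) you specialize $z=1$ where the paper takes $t=1$, which is equally valid. For $S_3$, your plan to check explicitly that the four-block partition is a congruence with medial quotient is slightly more careful than the paper, which checks only that the generating pairs respect parity and then asserts that the partition is a congruence.
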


\begin{proof}
(1) Let $\mathcal{C}(G) = G/\sim$ be the set of conjugacy classes of the group $G$, and let $\pi: G \to \mathcal{C}(G)$ denote the canonical projection mapping each element $x \in G$ to its conjugacy class $[x]$. We endow the set $\mathcal{C}(G)$ with the trivial quandle structure, defined by $[x] \circ [y] = [x]$ for all $[x], [y] \in \mathcal{C}(G)$.

We claim that $\pi: \Conj(G) \to (\mathcal{C}(G), \circ)$ is a quandle homomorphism. Indeed, for any $x, y \in G$, the element $x * y = y^{-1}xy$ is by definition conjugate to $x$ in $G$. Thus, we have:
\[
\pi(x \triangleright y) = [y^{-1}xy] = [x] = [x] \circ [y] = \pi(x) \circ \pi(y).
\]
Since $\pi$ is a surjective quandle homomorphism, its associated equivalence relation, which is exactly the conjugation relation $\sim$, is a congruence of the quandle $\Conj(G)$.

Furthermore, the quotient quandle $\Conj(G)/\sim \ \cong (\mathcal{C}(G), \circ)$ is a trivial quandle. Since every trivial quandle satisfies the medial identity $(a \circ b) \circ (c \circ d) = a = (a \circ c) \circ (b \circ d)$, the quotient $\Conj(G)/\sim$ is medial. By the universal property of the medial congruence (which is the smallest congruence yielding a medial quotient), the medial congruence of $\Conj(G)$ must be contained in $\sim$. Consequently, the medial quotient $\Conj(G)^{\textit{med}}$ maps surjectively onto $\Conj(G)/\sim$, which implies $|\Conj(G)^{\textit{med}}| \ \ge\ |\mathcal{C}(G)| = k(G)$. Note that if $G$ is abelian, $\Conj(G)$ is already trivial, hence medial, and every conjugacy class is a singleton, yielding the trivial equality $|\Conj(G)^{\textit{med}}| = |G| = k(G)$.

\medskip

(2) Suppose first that $\gamma_2(G)\le Z(G)$, which implies that the quotient group $G/Z(G)$ is abelian. For any $x,y,z,t\in G$, we use the exponent notation $x^g = g^{-1}xg$ for conjugation. By definition of the operation in $\Conj(G)$, we have:
\[
(x * y) * (z *  t)=(x^y)^{t^{-1}zt} = x^{y z^t} \quad \text{and} \quad (x * z) * (y * t)=(x^z)^{t^{-1}yt} = x^{z y^t}.
\]
Thus, the medial identity $(x *  y) * (z *  t) = (x *  z) * (y *  t)$ holds for all $x \in G$ if and only if the element $g = y z^t (z y^t)^{-1}$ commutes with every element of $G$, meaning that $g \in Z(G)$ for all $y,z,t \in G$. Expanding this element explicitly yields:
\[
y z^t (z y^t)^{-1} = y (t^{-1}zt) (t^{-1}yt)^{-1} z^{-1} = y t^{-1} z t t^{-1} y^{-1} t z^{-1} = y t^{-1} z y^{-1} t z^{-1}.
\]
Since $G/Z(G)$ is abelian, we can rearrange the factors freely modulo $Z(G)$:
\[
y t^{-1} z y^{-1} t z^{-1} \equiv y y^{-1} t^{-1} t z z^{-1} \equiv 1 \pmod{Z(G)}.
\]
Hence, $y t^{-1} z y^{-1} t z^{-1} \in Z(G)$ is always satisfied under the hypothesis $\gamma_2(G)\le Z(G)$, proving that $\Conj(G)$ is medial.

Conversely, suppose $\Conj(G)$ is medial. Taking $t=1$ in the medial identity $(x * y) * (z *  t)=(x *  z) * (y *  t)$ yields $(x *  y) *  z = (x *  z) *  y$, which unfolds to $z^{-1}y^{-1}xyz=y^{-1}z^{-1}xzy$ for all $x,y,z\in G$. This equation states that the conjugations by $yz$ and $zy$ coincide on all elements $x \in G$, which is equivalent to $(yz)(zy)^{-1} = [y,z]\in Z(G)$. Since $y$ and $z$ are arbitrary elements of $G$, it follows that $\gamma_2(G)\le Z(G)$.

\medskip
(3) Let $H = G/\gamma_3(G)$. The quotient group $H$ satisfies $\gamma_2(H) = \gamma_2(G)/\gamma_3(G) \le Z(G/\gamma_3(G)) = Z(H)$. By part (2), the conjugation quandle $\Conj(G/\gamma_3(G))$ is medial. By the universal property of the medial congruence, the canonical group projection $G \to G/\gamma_3(G)$ induces a surjective quandle homomorphism from the medial quotient $\Conj(G)^{\textit{med}}$ onto $\Conj(G/\gamma_3(G))$. Combining this surjection with the bound from part (1) yields $|\Conj(G)^{\textit{med}}| \ge \max\{ k(G), |G/\gamma_3(G)| \}$.

For the symmetric group $G=S_3$, we have $\gamma_2(S_3)=\gamma_3(S_3)=A_3$, so $S_3/\gamma_3(S_3)\cong\Z_2$, which gives $|S_3/\gamma_3(S_3)|=2$ and $k(S_3)=3$. The generating pairs of the medial congruence on $\Conj(S_3)$ have the form $(x^{yz^t}, x^{zy^t})$, where $yz^t$ and $zy^t$ always share the same parity in $S_3$. Under conjugation, a $3$-cycle either stays fixed or maps to its inverse depending strictly on the parity of the conjugating element. Thus, the partition $\{ \{1\}, \{(1\,2\,3)\}, \{(1\,3\,2)\}, \{(1\,2), (1\,3), (2\,3)\} \}$ forms a valid medial congruence quotient. Since evaluated choices such as $x=(1\,2)$, $y=(1\,3)$, $z=(2\,3)$, $t=1$ successfully force the distinct transpositions to collapse together, this quotient is minimal, yielding exactly $|\Conj(S_3)^{\textit{med}}| = 4$.
\end{proof}
\begin{remark}\label{Remark 5.8}
The condition $\gamma_2(G)\le Z(G)$ is strictly weaker than the condition
that $G$ be abelian. For instance, for the dihedral group $D_4$ of order $8$,
the commutator subgroup $\gamma_2(D_4)$ is the center $Z(D_4)\cong\Z_2$, so
$\gamma_2(D_4)\le Z(D_4)$, and $\Conj(D_4)$ is medial. However $D_4$ itself is
non-abelian.
\end{remark}

\begin{theorem}\label{theorem 5.6}
Let $G$ be a group and $\varphi\in\Aut(G)$.

\begin{enumerate}
\item[(1)] $\Conj(G)^{com}=T_1$.

\item[(2)] $\Core(G)^{com}\cong\Core\!\left(G/\langle\!\langle g^3\mid g\in G\rangle\!\rangle\right)$.

\item[(3)] $\Alex(G,\varphi)$ is commutative if and only if $G$ is abelian, the map
$s\colon G\to G$, $s(y)=y^2$, is bijective, and $\varphi=s^{-1}$.
\end{enumerate}
\end{theorem}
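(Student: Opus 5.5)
We treat the three parts separately. In each case the idea is to write down the identity $x*y=y*x$ in group-theoretic terms, read off the relations it forces, and then check that a candidate quotient is commutative and satisfies the universal property.

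\emph{Part (1).} In $\Conj(G)$ the identity $x\conj y=y\conj x$ reads $y^{-1}xy=x^{-1}yx$.

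Take $y=1$. Then $x\conj 1=x$ and $1\conj x=1$, so the pair $(x,1)$ lies in $\theta_{com}$ for every $x\in G$. Hence every element is congruent to $1$, and $\Conj(G)^{com}=T_1$. This part is immediate.

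\emph{Part (2).} Let $N=\langle\!\langle g^3\mid g\in G\rangle\!\rangle$ and $\bar G=G/N$.

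First we show that $\Core(\bar G)$ is commutative. We need $yx^{-1}y=xy^{-1}x$ in $\bar G$, which is equivalent to $(yx^{-1})^3=1$, i.e.\ $(yx^{-1})(yx^{-1})(yx^{-1})=1$. This holds because $\bar G$ has exponent $3$.

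Next, the projection $G\to\bar G$ induces a surjective quandle homomorphism $\Core(G)\to\Core(\bar G)$. By the universal property it factors through $\Core(G)^{com}$, giving a surjection $\Core(G)^{com}\to\Core(\bar G)$.

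For the converse we must show that $x\equiv xn$ modulo $\theta_{com}$ for every $n\in N$. Then $\theta_{com}$ contains the kernel congruence of the projection, and the surjection above is an isomorphism.

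\begin{enumerate}
\item[(a)] Let $\theta$ be any quandle congruence on $\Core(G)$. The set $K=\{g\in G : g\,\theta\, 1\}$ should be a normal subgroup of $G$ with $\theta$ equal to coset equivalence mod $K$, at least for congruences containing $\theta_{com}$. To prove this we use the automorphisms of $\Core(G)$ given by left translations $x\mapsto ax$ and by inversion. Congruences of the quandle need not a priori be invariant under these automorphisms. So instead we argue directly: $\theta_{com}$ is generated by the pairs $(yx^{-1}y,\,xy^{-1}x)$, and this family of generating pairs is stable under left translation $x\mapsto ax$, under $x\mapsto x^{-1}$, and under conjugation. Hence $\theta_{com}$ itself is invariant under these maps.
\item[(b)] From the invariance in (a) we get $x\,\theta_{com}\,y \iff x^{-1}y\,\theta_{com}\,1$. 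Thus $K$ is a subgroup, normal by invariance under conjugation, and $\theta_{com}$ is coset equivalence mod $K$.
\item[(c)] Taking $x=1$ in the generating pair gives $y^2\,\theta_{com}\,y^{-1}$, hence $y^3\in K$. Therefore $N\le K$.
\end{enumerate}

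The main obstacle is step (a). The invariance of $\theta_{com}$ under the automorphisms $x\mapsto ax$ and $x\mapsto x^{-1}$ must be checked carefully: one shows that the image of the smallest congruence under an automorphism is again a congruence containing the generators, so it contains $\theta_{com}$, and by symmetry they are equal.

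\emph{Part (3).} The identity $\varphi(xy^{-1})y=\varphi(yx^{-1})x$ must hold for all $x,y\in G$.

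\begin{enumerate}
\item[(a)] Put $y=1$: this gives $\varphi(x)=\varphi(x^{-1})x$, so $x=\varphi(x)^2$ for all $x$. Hence $s\circ\varphi=\id$, so $s$ is surjective with right inverse $\varphi$. Since $\varphi$ is a bijection, $s=\varphi^{-1}$ is a bijection and $\varphi=s^{-1}$.
\item[(b)] Since $s=\varphi^{-1}$ is a homomorphism, $(ab)^2=a^2b^2$ for all $a,b$. This gives $ba=ab$, so $G$ is abelian.
\item[(c)] Conversely, assume $G$ is abelian, $s$ is bijective and $\varphi=s^{-1}$. Then $s^{-1}$ is an automorphism, and
\[
\varphi(xy^{-1})y=\varphi(x)\varphi(y)^{-1}\varphi(y)^2=\varphi(x)\varphi(y),
\]
which is symmetric in $x$ and $y$. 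Hence $\Alex(G,\varphi)$ is commutative.
\end{enumerate}
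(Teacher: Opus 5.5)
Your proof is correct and follows essentially the same route as the paper. Parts (1) and (3) use the same substitutions and computations. In (2) you, like the paper, use invariance of $\theta_{com}$ under translations to show that the class of $1$ is a normal subgroup containing all cubes, and then compare with the commutative quotient $\Core(G/N)$. Your justification of that invariance, via stability of the generating pairs, is more careful than the paper's.

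The one flaw is the aside in (2)(a) that this holds for every congruence containing $\theta_{com}$, which is false in general. For the Heisenberg group $G$ of order $27$, $\Core(G)$ is already commutative, yet it has a congruence whose class of $1$ is $\langle a\rangle$ for a non-central $a$. You never use that aside, since your argument only needs $\theta_{com}$ itself, so you should simply drop it.
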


\begin{proof}
(1) In $\Conj(G)$ we have $1 *  y=y^{-1}\cdot 1\cdot y=1$ and $y *  1=1^{-1}y1=y$. Commutativity forces $1=y$ for all $y$, so $\Conj(G)^{com}=T_1$.

(2) Let $N=\langle\!\langle g^3\mid g\in G\rangle\!\rangle$ be the normal closure of $\{g^3\mid g\in G\}$ in $G$. Let $\theta$ be the congruence on $\Core(G)$ generated by the commutativity pairs $(x*y,y*x)$, and let $N'=\{g\in G\mid g\sim_\theta 1\}$ be the congruence class of the identity.

\emph{Step 1: $N'$ is a normal subgroup of $G$.} The quandle operation in $\Core(G)$ is given by $x*y = yx^{-1}y$. For any $a \in G$, the left and right translations $L_a(x) = ax$ and $R_a(x) = xa$ are bijections on $G$. Moreover, they are quandle automorphisms of $\Core(G)$ since $L_a(x*y) = a(yx^{-1}y) = (ay)(ax)^{-1}(ay) = L_a(x)*L_a(y)$, and similarly $R_a(x*y) = (yx^{-1}y)a = (ya)a^{-1}x^{-1}a^{-1}(ya) = (ya)(xa)^{-1}(ya) = R_a(x)*R_a(y)$. Since automorphisms preserve any generated congruence, we have:
\[
x\sim_\theta y \iff a^{-1}x\sim_\theta a^{-1}y \iff xa^{-1}\sim_\theta ya^{-1} \quad\text{for all }a\in G.
\]
Setting $y=1$ yields $x\in N' \iff a^{-1}xa\in N'$, meaning that $N'$ is normal in $G$. Furthermore, if $x,y\in N'$, then $x\sim_\theta 1$ and $y\sim_\theta 1$. By the left invariance under $y^{-1}$, we obtain $y^{-1}x\sim_\theta y^{-1}1 = y^{-1} \sim_\theta 1$, hence $y^{-1}x\in N'$, proving that $N'$ is a subgroup of $G$.

\emph{Step 2: $\theta$ is the partition into cosets of $N'$.} By left translation invariance, we have $x\sim_\theta y \iff y^{-1}x\sim_\theta 1 \iff y^{-1}x\in N' \iff x\in yN'$. Thus, the equivalence classes of $\theta$ are precisely the left cosets of $N'$ in $G$, and the quotient quandle $\Core(G)/\theta$ is isomorphic to $\Core(G/N')$.

\emph{Step 3: $N'\supseteq N$.} In $\Core(G)$, setting $x=1$ in the relation $1*y \sim_\theta y*1$ gives $y^2 \sim_\theta y^{-1}$, which implies $y^3 \sim_\theta 1$. Thus, $g^3 \in N'$ for all $g \in G$. Since $N'$ is a normal subgroup, it follows by definition of the normal closure that $N \subseteq N'$.

\emph{Step 4: $N'\subseteq N$.} Conversely, let $H=G/N$. The quotient group $H$ satisfies $h^3 = 1$ for all $h \in H$. Let $x,y\in H$ and set $a=x^{-1}y$. In $\Core(H)$, we compute $x*y = yx^{-1}y = (xa)x^{-1}(xa) = xa^2$ and $y*x = xy^{-1}x = x(xa)^{-1}x = xa^{-1}$. Since $a^3=1$ in $H$, we have $a^2=a^{-1}$, which implies $x*y=y*x$. Thus, $\Core(H)$ is commutative. By the universal property of the commutative congruence, $\theta$ factors through the projection $G \to G/N$, which implies $N' \subseteq N$.

\emph{Conclusion.} Steps 3 and 4 imply $N'=N$, which completes the proof of statement (2).

\medskip

(3) Suppose $\Alex(G,\varphi)$ is commutative, i.e. $x*y=y*x$ for all
$x,y\in G$, where $x*y=\varphi(xy^{-1})y$. Taking $x=1$ gives
\[
1*y=\varphi(y^{-1})y,\qquad y*1=\varphi(y),
\]
so commutativity gives $\varphi(y^{-1})y=\varphi(y)$, i.e.
$\varphi(y^{-1})=\varphi(y)y^{-1}$, hence $\varphi(y)^{-1}=\varphi(y)y^{-1}$
and therefore $\varphi(y)^2=y$. So $\varphi^{-1}(y)=y^2$ for all $y$, i.e.
$\varphi=s^{-1}$ where $s(y)=y^2$; in particular $s$ is bijective. Then for all
$a,b\in G$,
\[
(ab)^2=\varphi^{-1}(ab)=\varphi^{-1}(a)\varphi^{-1}(b)=a^2b^2,
\]
so $ab=ba$, i.e. $G$ is abelian.

Conversely, if $G$ is abelian, $s$ is bijective and $\varphi=s^{-1}$, then
\[
x*y=\varphi(xy^{-1})y=\varphi(x)\varphi(y)^{-1}y
=\varphi(x)\varphi(y)^{-1}\varphi(y)^2=\varphi(x)\varphi(y)=\varphi(y)\varphi(x)=y*x,
\]
so $\Alex(G,\varphi)$ is commutative.
\end{proof}

\begin{remark}\label{Remark 5.10}
The Theorem \ref{theorem 5.6}(3) covers the following examples.
\begin{itemize}
\item $G=\mathbb{Z}_3$, $\varphi=-1$. Then $s(y)=2y$ is bijective on $\mathbb{Z}_3$ and
$\varphi=s^{-1}$, so $\Alex(\mathbb{Z}_3,-1)=\R_3$ is commutative.
\item $G=\mathbb{Z}_{2n+1}$, $\varphi=\tfrac12$. Then $s(y)=2y$ is bijective and
$\varphi=s^{-1}$, so $C_{2n+1}=\Alex(\mathbb{Z}_{2n+1},\tfrac12)$ is commutative.
\item $G=\mathbb{Z}_2$, $\varphi=\id$. Then $s(y)=0$ is not bijective, and indeed
$\Alex(\mathbb{Z}_2,\id)=T_2$ is not commutative.
\end{itemize}
The quandles $C_{2n+1}=\Alex(\mathbb{Z}_{2n+1},\tfrac12)$ are precisely the finite cyclic
commutative medial quandles, i.e.\ the finite midpoint quandles.
\end{remark}

In the following corollary we assume that $R_0=\Core(\Z)$, $R_0^{\mathrm{com}}\cong R_3$

\begin{corollary}\label{corollary 5.8}
Let $n\ge 0$ and let $\R_n$ be the dihedral quandle over $\mathbb{Z}_n$. Then
\[
\R_n^{com}\cong \R_{\gcd(n,3)}.
\]
In particular, when $n\equiv 0 \pmod 3$, one has $\R_n^{com}\cong \R_3$; when $n\not\equiv 0 \pmod 3$, one has $\R_n^{com}\cong T_1$.
\end{corollary}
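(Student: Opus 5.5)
The plan is to apply Theorem~\ref{theorem 5.6}(2) with $G=\mathbb{Z}_n$, written additively; for $n=0$ we take $G=\mathbb{Z}$, in line with the convention $\R_0=\Core(\mathbb{Z})$. The core operation then reads $x*y=2y-x$, so $\R_n=\Core(\mathbb{Z}_n)$, and the theorem gives
\[
\R_n^{com}\cong \Core\bigl(\mathbb{Z}_n/\langle\!\langle 3g \mid g\in\mathbb{Z}_n\rangle\!\rangle\bigr).
\]
Because $G$ is abelian, the normal closure of the set of cubes is just the subgroup $3\mathbb{Z}_n=\{3g\mid g\in \mathbb{Z}_n\}$. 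Everything therefore comes down to identifying the quotient group $\mathbb{Z}_n/3\mathbb{Z}_n$.

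The group computation is elementary. If $n\ge 1$, the subgroup $3\mathbb{Z}_n$ is generated by the class of $\gcd(n,3)$, so it has index $\gcd(n,3)$ and
\[
\mathbb{Z}_n/3\mathbb{Z}_n\cong \mathbb{Z}_{\gcd(n,3)}.
\]
If $n=0$, the quotient is $\mathbb{Z}/3\mathbb{Z}\cong\mathbb{Z}_3$, and $\gcd(0,3)=3$, so the same formula holds. Taking core quandles gives $\R_n^{com}\cong \Core(\mathbb{Z}_{\gcd(n,3)})=\R_{\gcd(n,3)}$.

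For the two special cases:
\begin{itemize}
\item If $3\mid n$, then $\gcd(n,3)=3$ and we get $\R_3$.
\item Otherwise $\gcd(n,3)=1$, and $\R_1$ is the one-element quandle $T_1$.
\end{itemize}
As a consistency check, this recovers $\R_4^{com}=\R_5^{com}=T_1$, as noted earlier, and $\R_0^{com}\cong\R_3$, as assumed before the corollary.

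There is no substantial obstacle, since all the work is contained in Theorem~\ref{theorem 5.6}(2). The only points needing care are translating $g^3$ into additive notation as $3g$, and treating the degenerate case $n=0$, where one must check that $\gcd(0,3)=3$ matches $\mathbb{Z}/3\mathbb{Z}$.
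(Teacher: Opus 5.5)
Your proposal is correct and follows the same route as the paper's proof: apply Theorem~\ref{theorem 5.6}(2) to $G=\mathbb{Z}_n$, note that the normal closure of the cubes is $3\mathbb{Z}_n$, and identify $\mathbb{Z}_n/3\mathbb{Z}_n\cong\mathbb{Z}_{\gcd(n,3)}$. Your explicit treatment of the case $n=0$ (with $\gcd(0,3)=3$) and of the two special cases is slightly more careful than the paper's one-line argument.
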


\begin{proof}
The statement follows as a direct application of the general framework established for core quandles. Since the dihedral quandle $\R_n$ coincides structurally with $\Core(\mathbb{Z}_n)$, applying Theorem \ref{theorem 5.6}(2) yields $\Core(\mathbb{Z}_n)^{\mathrm{com}} \cong \Core(\mathbb{Z}_n / 3\mathbb{Z}_n)$. Identifying the quotient ring $\mathbb{Z}_n / 3\mathbb{Z}_n$ naturally with $\mathbb{Z}_{\gcd(n,3)}$ immediately establishes the structural isomorphism $\R_n^{com} \cong \R_{\gcd(n,3)}$.
\end{proof}

\begin{remark}\label{Remark:dihedral-decomposition}
As an elementary structural property of the dihedral framework, if $n = l \cdot k$, the underlying set of $\R_n = \mathbb{Z}_n$ naturally decomposes as a disjoint union of cosets $A_0 \sqcup A_1 \sqcup \cdots \sqcup A_{l-1}$, where each $A_m = \{ m + il \pmod n \mid 0 \le i \le k-1 \}$ forms a subquandle structurally isomorphic to $\R_k$. Under the quandle operation, these components satisfy the relation $A_m * A_{m'} \subseteq A_{2m'-m}$ for all indices.
\end{remark}

\section{Questions for Further Research} \label{questions}

\begin{question}
\begin{enumerate}
\item[(1)] Luc Ta \cite[Proposition 6.1 and Corollary 6.2]{LT} fully describes free medial Latin quandles and free commutative medial quandles. What can we say about automorphism groups of these quandles?
\item[(2)] Find a normal form for words in commutative medial quandles.
\end{enumerate}
\end{question}

\begin{question}
Let $FQ_n$ be a free quandle of rank $n$. What is its commutativization $FQ_n^{com}$?
\end{question}

\begin{ack}
Valeriy G. Bardakov is supported by the state contract of the Sobolev Institute of Mathematics, SB RAS (project FWNF-2026-0029). The authors would like to thank Luc Ta and the participants of the \'Evariste Galois Seminar (Novosibirsk State University) for useful discussions, comments, and suggestions.
\end{ack}

\end{document}